\newcommand{\beq}{\begin{equation}}
\newcommand{\eeq}{\end{equation}}
\newcommand{\bea}{\begin{eqnarray}}
\newcommand{\eea}{\end{eqnarray}}
\newcommand{\beas}{\begin{eqnarray*}}
\newcommand{\eeas}{\end{eqnarray*}}
\newtheorem{theorem}{Theorem}[section]
\newtheorem{definition}[theorem]{Definition}
\newtheorem{proposition}[theorem]{Proposition}
\newtheorem{corollary}[theorem]{Corollary}
\newtheorem{lemma}[theorem]{Lemma}
\newtheorem{remark}[theorem]{Remark}
\newtheorem{example}[theorem]{Example}
\newtheorem{examples}[theorem]{Examples}
\newtheorem{foo}[theorem]{Remarks}
\newcommand{\ep}{\epsilon}
\newcommand{\E}{\mathbb E}
\newcommand{\C}{\mathbb C}
\newcommand{\bpartial}{\overline{\partial}}
\newcommand{\bw}{\overline{w}}
\newcommand{\tr}{\mathrm tr}
\title{Generalized stochastic areas, Winding numbers, and hyperbolic Stiefel fibrations}
\author{Fabrice Baudoin\footnote{Author supported in part by NSF grant  DMS-1901315}, Nizar Demni, Jing Wang\footnote{Author supported in part by NSF grant  DMS-1855523}}
\begin{document}
\maketitle

\begin{abstract}
We study the Brownian motion on the non-compact Grassmann manifold $\frac{\mathbf{U}(n-k,k)} {\mathbf{U}(n-k)\mathbf{U}(k)}$ and some of its functionals. The key point is to realize this Brownian motion as a matrix diffusion process, use matrix stochastic calculus and take advantage of the hyperbolic Stiefel fibration to study a functional that can be understood in that setting as a generalized stochastic area process. In particular, a connection to the generalized Maass Laplacian of the complex hyperbolic space is presented and applications to the study of Brownian windings in the Lie group $\mathbf{U}(n-k,k)$ are then given.
\end{abstract}

\tableofcontents

\section{Introduction}\label{Sec1}

In this introduction, we  first explain how the results of this paper fit into a  larger research project concerning the study of integrable functionals of Brownian motions on symmetric spaces. We then present our main results and how  the paper is structured.

\subsection*{Context}

Let $G$ be a Riemannian Lie group and $K,H$ two compact subgroups of $G$ with $K \subset H$. According to a standard construction due to L. B\'erard-Bergery (see \cite[Theorem 9.80]{Besse}), under some natural compatibility properties of the metric, one has a Riemannian fibration
\[
 H/K \to G/K \to G/H.
\]
One can construct interesting functionals of the Brownian motion on $G/H$ by lifting this Brownian motion to $G/K$ and then look at the fiber motion in $H/K$  of that lift using a skew-product type decomposition. When the coset space $G/H$ is a  symmetric space, in many cases it has been shown to produce functionals which remarkably turn out to be integrable in the sense that their Laplace transforms can be expressed using special functions from harmonic analysis. So far, this construction and the probabilistic study of those functionals and of their distributions, was carried out in details in the following cases:

\begin{itemize}
\item $G=\mathbf{U}(n)$, $K=\mathbf{U}(n-1)$, $H=\mathbf{U}(n-1)\mathbf{U}(1)$. In that case $G/H$ is the complex projective space $\mathbb{C}P^{n-1}$, and the B\'erard-Bergery fibration reduces to the classical  Hopf fibration $\mathbf{U}(1) \to \mathbb{S}^{2n-1} \to \mathbb{C}P^{n-1}$. This case was studied   in \cite{BW-SWHF}.
\item $G=\mathbf{U}(n-1,1)$, $K=\mathbf{U}(n-1)$, $H=\mathbf{U}(n-1)\mathbf{U}(1)$. In that case $G/H$ is the complex hyperbolic space $\mathbb{C}H^{n-1}$, and the B\'erard-Bergery fibration is the anti-de Sitter fibration  $\mathbf{U}(1) \to \mathbf{AdS}^{2n-1} \to \mathbb{C}H^{n-1}$. This case was studied   in \cite{BW-SWHF}.
\item $G=\mathbf{Sp}(n)$, $K=\mathbf{Sp}(n-1)$, $H=\mathbf{Sp}(n-1)\mathbf{Sp}(1)$. In that case $G/H$ is the quaternionic projective space $\mathbb{H}P^{n-1}$, and the B\'erard-Bergery fibration is the quaternionic  Hopf fibration $\mathbf{Sp}(1) \to \mathbb{S}^{4n-1} \to \mathbb{H}P^{n-1}$. This case was studied in \cite{BDW-area}.
\item $G=\mathbf{Sp}(n-1,1)$, $K=\mathbf{Sp}(n-1)$, $H=\mathbf{Sp}(n-1)\mathbf{Sp}(1)$. In that case $G/H$ is the quaternionic hyperbolic space $\mathbb{H}H^{n-1}$, and the B\'erard-Bergery fibration is the quaternionic  anti de-Sitter fibration $\mathbf{Sp}(1) \to \mathbf{AdS}_\mathbb{H}^{4n-1} \to \mathbb{H}H^{n-1}$. This case was studied in \cite{BDW-area}.

\item $G=\mathbf{Spin}(9)$, $K=\mathbf{Spin}(7)$, $H=\mathbf{Spin}(8)$. In that case $G/H$ is the octonionic projective line $\mathbb{O}P^{1} \simeq \mathbb{S}^8$, and the B\'erard-Bergery fibration is the octonionic  Hopf fibration $\mathbb{S}^7 \to \mathbb{S}^{15} \to \mathbb{S}^8$. This case was partially studied   in \cite{Baudoin-Cho}, see also \cite{CY}.

\item $G=\mathbf{U}(n)$, $K=\mathbf{U}(n-k)$, $H=\mathbf{U}(n-k)\mathbf{U}(k)$ with $k \ge 1$. In that case $G/H$ is the complex Grassmannian $G_{n,k}$ and the B\'erard-Bergery fibration, the Stiefel fibration, This case was studied   in \cite{BW-Winding}.
\end{itemize}

In the present paper, we complete further this list and focus on the case $G=\mathbf{U}(n-k,k)$, $K=\mathbf{U}(n-k)$, $H=\mathbf{U}(n-k)\mathbf{U}(k)$ with $k \ge 1$. The symmetric space $G/H =\mathbf{U}(n-k,k) / \mathbf{U}(n-k)\mathbf{U}(k)$  is then the dual symmetric space of $\mathbf{U}(n) / \mathbf{U}(n-k)\mathbf{U}(k)$, so in a sense our results in this paper are the \textit{hyperbolic} counterparts of the \textit{spherical} results in \cite{BW-Winding}.

\subsection*{Main results}

Our goal in this paper is to study the Brownian motion $(w_t)_{t \ge 0}$ and some of its functionals in the hyperbolic  complex Grassmann manifold $HG_{n,k}= \mathbf{U}(n-k,k) / \mathbf{U}(n-k)\mathbf{U}(k)$. 

As a first result, we show how to realise $(w_t)_{t \ge 0}$ as a matrix diffusion process. More precisely,  let $U_t=\begin{pmatrix}  Y_t & X_t \\ W_t & Z_t \end{pmatrix}$ be a Brownian motion on the Lie group of matrices $\mathbf{U}(n-k,k)$.  
Then, in Theorem \ref{main:s1}, we show that the process $(w_t)_{t \ge 0}:=\left( X_t  Z_t^{-1}\right)_{t \ge 0} $ is a matrix diffusion process in $\mathbb{C}^{(n-k)\times k}$ with generator $\frac12\Delta_{HG_{n,k}}$, where $\Delta_{HG_{n,k}}$ is the Laplace-Beltrami operator of $HG_{n,k}$ computed in a global system of coordinates. We shall refer to the latter as inhomogeneous by analogy with the usual inhomogeneous coordinates on the complex hyperbolic space $\mathbb{C}H^{n-1}$ which corresponds to $k=1$ (see e. g. \cite{BW-SWHF}). Realizing the Brownian motion on $HG_{n,k}$ as a matrix diffusion process has the advantage to make available all the tools from stochastic calculus and random matrix theory.  In particular, one can write an explicit stochastic differential equation (hereafter SDE) for the Markov process $J:=w^*w$  and after applying results from \cite{Gra-Mal}, deduce the one satisfied by the eigenvalue process of $J$. In this respect, we prove that almost surely, there is no collision between eigenvalues and relate the latter to an instance of the so-called Heckman-Opdam process. In particular, we readily deduce from \cite{Sch} that the eigenvalue process is the unique strong solution of the SDE it satisfies. 

In Section 3 we study in details the additive functional $\int_0^t \mathop{\tr} (J_s)ds$. Our interest in that functional comes from its close relationship to the generalized stochastic area functional that we later study. In this respect, our two main results are Proposition \ref{Lemma1}, Theorem \ref{theo-Laplace} and Proposition \ref{NewLaplace} below. More precisely, Proposition \ref{Lemma1} proves that almost surely, the following limiting behavior holds: 
\begin{equation*}
\lim_{t\to+\infty}\frac1t\int_0^t\tr(J_s)ds=k.
\end{equation*}
In particular, this limit theorem (and its proof) is very different from the corresponding one obtained in \cite{BW-Winding} (see Theorem 4.1. there). Indeed, It reflects the non-compactness of the Grassmann manifold $HG_{n,k}$ in the sense that the eigenvalues of $J$ all converge almost surely and more rapidly to the boundary at $\infty$. As to Theorem \ref{theo-Laplace}, it provides an explicit formula for the Laplace transform of $\int_0^t \mathop{\tr} (J_s)ds$ and its proof makes use of Girsanov Theorem together with the Karlin-McGregor formula. In Proposition \ref{NewLaplace}, we improve Theorem \ref{theo-Laplace} through the connection between the hyperbolic Jacobi operator and the radial part of the generalized Maass Laplacian in the complex hyperbolic space. Actually, we transform the Laplace formula obtained in Theorem \ref{theo-Laplace} into another one which looks more transparent regarding the above limiting result. For instance, we shall prove in the rank-one case $k=1$ how to recover the latter result from the new expression of the Laplace transform. Doing so gives the full expansion of the Laplace transform in the time variable and for small spectral parameters. However, since the computations are already tricky and tedious even in this particular case, we postpone them in an appendix at the end of the paper. 

In Section 4, we appeal to the hyperbolic Stiefel fibration
\[
\mathbf{U}(k) \to HV_{n,k} \to HG_{n,k},
\]
where $HV_{n,k}=\mathbf{U}(n-k,k)/\mathbf{U}(n-k)$, to provide a skew-product decomposition of the  Brownian motion on $HV_{n,k}$. The horizontal $HG_{n,k}$-part of that Brownian motion is nothing else but a Brownian motion $w$ on $HG_{n,k}$.
As to the fiber $\mathbf{U}(k)$-part, it is the stochastic development of the following Stratonovich line integral: 
\begin{equation*}
\int_{w[0,t]} \eta,
\end{equation*}
where $\eta$ is a $\mathfrak{u}(k)$-valued one-form on $HG_{n,k}$ such that $d \mathrm{tr} (\eta)$ is the K\"ahler form of the Riemannian K\"ahler space $HG_{n,k}$. Therefore, in the sense of  \cite{BW-SWHF}, the fiber functional may be interpreted as the generalized stochastic area process of the Brownian motion $w$. Moreover, it turns out that 
\begin{align*}
\int_{w[0,t]} \mathrm{tr} (\eta) 
 &=i\mathcal{B}_{\int_0^t \mathrm{tr}(J)ds}
\end{align*}
where $\mathcal{B}$ is a one-dimensional Brownian motion independent of $J$. Therefore the distribution of $\int_{w[0,t]} \mathrm{tr} (\eta)$ is directly related to that  of $\int_0^t\tr(J_s)ds$ already studied in details in Section 3. In particular, one obtains
from Proposition \ref{Lemma1} together with a time scaling the following limit in distribution as $t\to+\infty$:
\[
\frac{1}{i\sqrt{t}} \int_{w[0,t]} \mathrm{tr} (\eta) \to \mathcal{N}(0,k).
\]
Surprisingly, this limiting distribution has finite moments of all orders in contrast to the compact case for which the Cauchy distribution is obtained (Proposition 4.5. in \cite{BW-Winding}).  

Finally, as an application of our methods and results, we prove the following asymptotic winding theorem.

\begin{theorem}
Let $U_t=\begin{pmatrix}  Y_t & X_t \\ W_t & Z_t \end{pmatrix}$ be a Brownian motion on the Lie group $\mathbf{U}(n-k,k)$ with $1 \le k \le n-k$.  One has the polar decomposition
\[
\det (Z_t)=\varrho_t e^{i\theta_t}
\]
where $\varrho_t \ge 1$ is a continuous semimartingale  and $\theta_t$ is a continuous martingale such that the following convergence holds in distribution when $t \to +\infty$
\[
\frac{\theta_t}{\sqrt{t}} \to \mathcal{N}(0,2k) .
\]

\end{theorem}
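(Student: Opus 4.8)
The plan is to reduce the statement to a martingale central limit theorem, with the quadratic variation of $\theta_t$ controlled by the additive functional $\int_0^t \mathrm{tr}(J_s)\,ds$ analysed in Section 3. First I would record the defining relation of the group: with $J_{n-k,k}=\mathrm{diag}(I_{n-k},-I_k)$, the identity $U_tJ_{n-k,k}U_t^*=J_{n-k,k}$ read off in blocks gives $Z_tZ_t^*-W_tW_t^*=I_k$, hence $Z_tZ_t^*\ge I_k$ and $\varrho_t=|\det Z_t|=\sqrt{\det(I_k+W_tW_t^*)}\ge 1$. This is a continuous semimartingale (a smooth function of the entries of $U_t$), so the polar decomposition $\det Z_t=\varrho_t e^{i\theta_t}$ is well defined and $\theta_t=\Im\log\det Z_t$.

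Next, writing the Brownian motion as the left-invariant diffusion $dU_t=U_t\circ dM_t$ with $M_t$ a Brownian motion in $\mathfrak u(n-k,k)$, the $(2,2)$-block yields $dZ_t=W_t\circ dB_t+Z_t\circ dD_t$, where $B$ is the off-diagonal block and $D$ the skew-Hermitian $\mathfrak u(k)$-block of $M$; these are independent. The Stratonovich chain rule applied to $\log\det Z_t=\mathrm{tr}\log Z_t$ gives $d\log\det Z_t=\mathrm{tr}(P_t\circ dB_t)+\mathrm{tr}(dD_t)$ with $P_t=Z_t^{-1}W_t$, and the same block relations give the key identity $\mathrm{tr}(P_tP_t^*)=\mathrm{tr}\big(I_k-(Z_t^*Z_t)^{-1}\big)=\mathrm{tr}(J_t)$. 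Converting to It\^o, the correction produced by $\mathrm{tr}(P_t\circ dB_t)$ is proportional to $\langle d\overline B,dB\rangle$ and is therefore real, while $\mathrm{tr}(dD_t)$ is purely imaginary (trace of a skew-Hermitian increment); hence the finite-variation part of $\theta_t$ vanishes and $\theta_t$ is a continuous local martingale, $\theta_t=\Im\!\int_0^t\mathrm{tr}(P_s\,dB_s)+\Im\!\int_0^t\mathrm{tr}(dD_s)$. The first term is the generalized stochastic area of Section 4, the second the winding of the $\mathbf U(k)$-fiber.

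Computing quadratic variations and using the independence of $B$ and $D$ together with the metric normalization, I expect $\langle\theta\rangle_t=\int_0^t\mathrm{tr}(J_s)\,ds+k\,t$, the two summands being the horizontal (area) and vertical (fiber) contributions. By Proposition \ref{Lemma1}, $\frac1t\int_0^t\mathrm{tr}(J_s)\,ds\to k$ almost surely, so $\frac1t\langle\theta\rangle_t\to 2k$. Writing $\theta_t=\gamma_{\langle\theta\rangle_t}$ for a standard Brownian motion $\gamma$ (Dambis--Dubins--Schwarz) and $\frac{\theta_t}{\sqrt t}=\sqrt{\langle\theta\rangle_t/t}\cdot\gamma_{\langle\theta\rangle_t}/\sqrt{\langle\theta\rangle_t}$ then gives convergence in distribution to $\mathcal N(0,2k)$.

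The main obstacle is the bookkeeping in the It\^o decomposition of $\log\det Z_t$: one must check that the drift of the imaginary part cancels exactly (yielding the martingale property) and that the quadratic variation collapses to $\int_0^t\mathrm{tr}(J_s)\,ds+k\,t$ with the right constant. The cancellation rests on $\mathrm{tr}(dD_t)$ being purely imaginary and on the It\^o correction of the $B$-term being real, both consequences of the $\mathfrak u(n-k,k)$ block structure; obtaining the constant $2k=k+k$ requires tracking the normalization consistently with Section 3, the horizontal part reproducing the $\mathcal N(0,k)$ area limit and the fiber part contributing the remaining $k$.
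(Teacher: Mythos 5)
Your overall route is, in substance, the same decomposition the paper uses: the paper obtains $i\theta_t=i\theta_0+\mathrm{tr}(D_t)+\int_{w[0,t]}\mathrm{tr}(\eta)$ from the skew-product decomposition (Theorem \ref{skew} combined with Lemma 4.6 in \cite{BW-Winding}), identifies the area term with $i\mathcal{B}_{\int_0^t\mathrm{tr}(J_s)ds}$ (Theorem \ref{LimitTheorem}), and adds the independent fiber term; you recover exactly these two orthogonal martingales by direct It\^o calculus on $\log\det Z_t$, which is a legitimate and arguably more self-contained derivation. Your preparatory steps are correct: $\varrho_t\ge 1$ follows from $Z_tZ_t^*=I_k+W_tW_t^*$; the It\^o correction to $\mathrm{tr}(P\circ dB)$ is indeed real (it equals $k(n-k)\,dt$, coming from $d\beta\,d\gamma=2(n-k)I_k\,dt$); $\mathrm{tr}(dD_t)$ is purely imaginary; $\mathrm{tr}(PP^*)=\mathrm{tr}(J)$ holds by similarity of $Z^*Z$ and $ZZ^*$; hence $\theta$ is a continuous martingale (a true one, since its bracket is dominated by a linear function of $t$) whose horizontal bracket contribution is $\int_0^t\mathrm{tr}(J_s)ds\sim kt$ by Proposition \ref{Lemma1}, and the Dambis--Dubins--Schwarz argument at the end is standard.

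The genuine gap is the constant you only ``expect'' for the fiber term, and it is not innocent bookkeeping. With the paper's own normalization \eqref{eq-A_t} of the driving noise --- the one that produces $d\alpha\,d\alpha=-2kI_k\,dt$ and $d\beta\,d\gamma=2(n-k)I_k\,dt$, hence the generator in Theorem \ref{main:s1} and all of Section 3 that you invoke for the horizontal part --- the diagonal of the $\mathfrak{u}(k)$-block is driven by $T_j=\sqrt{2}\,iE_{jj}$, so that
\begin{equation*}
\mathrm{tr}(dD_t)=\mathrm{tr}(d\alpha_t)=\sqrt{2}\,i\sum_{j=n-k+1}^{n}d\hat{B}^j_t,
\qquad d\langle \Im\,\mathrm{tr}(D)\rangle_t=2k\,dt,
\end{equation*}
not $k\,dt$. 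Carried out faithfully, your computation therefore yields $\langle\theta\rangle_t=\int_0^t\mathrm{tr}(J_s)ds+2kt\sim 3kt$, i.e.\ a limit law $\mathcal{N}(0,3k)$ rather than the claimed $\mathcal{N}(0,2k)$. The paper lands on $2k$ because its proof asserts the scaling $\mathrm{tr}(D_t)\overset{d}{=}i\sqrt{t}\,\mathcal{N}(0,k)$ for the fiber Brownian motion, which amounts to declaring $iE_{jj}$ (rather than $\sqrt{2}\,iE_{jj}$) to be unit vectors of $\mathfrak{u}(k)$; but the fiber of the Riemannian submersion $HV_{n,k}\to HG_{n,k}$ inherits from the ambient inner product on $\mathfrak{u}(n-k,k)$ precisely $-\frac12\mathrm{tr}(\xi\zeta)$, under which the unit diagonal vectors are $\sqrt{2}\,iE_{jj}$. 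So the factor of two you flagged as ``the main obstacle'' is exactly where your proof is incomplete, and it cannot be settled by citing Section 3 alone: you must either produce the fiber computation and confront the resulting $3k$, or locate and justify a normalization under which the fiber contributes $kt$ --- and this is also the point at which your attempt and the paper's own argument are in genuine tension with each other.
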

This is the hyperbolic analogue of Theorem 1.1. in \cite{BW-Winding} and reminds of the long time behavior of windings of cusp forms around geodesics in modular surfaces (\cite{Lejan-Gui}, Theorem 2.1.). 

\subsection*{Notations}

Throughout the paper we will use the following notations.

\begin{itemize}

\item If $M \in \mathbb{C}^{n \times n}$ is a $n \times n$ matrix with complex entries, we will denote $M^*=\overline{M}^T$ its adjoint.

\item If $z_j=x_j +i y_j$ is a complex coordinate system, then
\[
\frac{\partial }{\partial z_j }= \frac{1}{2} \left(  \frac{\partial }{\partial x_j } -i \frac{\partial }{\partial y_j }\right) , \quad \frac{\partial }{\partial \bar{z}_j }= \frac{1}{2} \left(  \frac{\partial }{\partial x_j } +i \frac{\partial }{\partial y_j }\right)
\]
are the Wirtinger operators. 
\item Throughout the paper we work on a filtered probability space $(\Omega, (\mathcal{F}_t)_{t \ge 0} , P)$ that satisfies the usual conditions.

\item If $X$ and $Y$ are semimartingales, we denote by $\int X dY$ the It\^o integral,  by $\int X \circ dY$ the Stratonovich integral and by $\int dX dY$ or $\langle X, Y \rangle$ the quadratic variation. Of course, all these integrals are to be understood entry-wise. 
For instance, the quadratic variation $\int dMdN$ is the matrix whose entries are 
\begin{equation*}
\left(\int dXdY\right)_{ij}= \sum_{\ell} \int dX_{i\ell}dY_{\ell j}.
\end{equation*}

\item If $M$ is a semimartingale and $\eta$ a one-form, then $\int_{M[0,t]} \eta$ denotes the Stratonovich line integral of $\eta$ along the paths of $M$.

\end{itemize}

\section{Brownian motion on  hyperbolic complex Grassmannian manifolds}\label{Sec2}

\subsection{The hyperbolic complex Grassmannian manifold and inhomogeneous coordinates}\label{sec:notations}

Let $n\in \mathbb{N}$, $n \ge 2$, and $k\in\{1,\dots, n-1\}$. We consider the non-compact Lie group of matrices which belongs to the family of classical Lie groups of matrices (see \cite{Hec-Sch}, \cite{Hel}):
\[
\mathbf{U}(n-k,k)=\left\{ M \in GL(n,\mathbb{C}), M^* \begin{pmatrix} I_{n-k}   & 0 \\ 0 & -I_{k} \end{pmatrix} M =  \begin{pmatrix} I_{n-k}   & 0 \\ 0 & -I_{k} \end{pmatrix}\right\}.
\]
The Lie algebra of $\mathbf{U}(n-k,k)$ is given by
\[
\mathfrak{u}(n-k,k)=\left\{ A \in \C^{n \times n}, A^* \begin{pmatrix} I_{n-k}   & 0 \\ 0 & -I_{k} \end{pmatrix} + \begin{pmatrix} I_{n-k}   & 0 \\ 0 & -I_{k} \end{pmatrix}A= 0 \right\}.
\]
Note that the real dimension of $\mathfrak{u}(n-k,k)$ and hence $\mathbf{U}(n-k,k)$ is $n^2$. 

We consider the complex hyperbolic Stiefel manifold $HV_{n,k}$ defined as the set
\begin{equation}\label{eq-HV}
HV_{n,k}=\left\{ \begin{pmatrix} X  \\ Z \end{pmatrix}  \in \C^{n\times k} , X \in  \C^{(n-k)\times k}, Z \in \C^{k\times k} | X^* X-Z^*Z=-I_k \right\}.
\end{equation}
It is a non-compact algebraic real sub-manifold of $\C^{n\times k}$ with real dimension $2nk-k^2$.
We note that $\mathbf{U}(n-k,k)$ acts transitively on $HV_{n,k}$, the action being  defined by $(g,M) \to gM$, $g \in \mathbf{U}(n-k,k)$, $M \in HV_{n,k}$. For this action, the isotropy group of $\begin{pmatrix} 0 \\ I_k \end{pmatrix} \in HV_{n,k}$ is simply the unitary group $\mathbf{U}(n-k)$, where $\mathbf{U}(n-k)$ is identified with the normal subgroup:
\[
\left\{   \begin{pmatrix}  Y  & 0 \\ 0 & I_{k} \end{pmatrix} , Y \in \mathbf{U}(n-k) \right\}.
\]
Therefore $HV_{n,k}$ can be identified with the coset space $\mathbf{U}(n-k,k) / \mathbf{U}(n-k)$. 

In addition, there is a right  action of the  unitary group $\mathbf{U}(k)$ on $HV_{n,k}$, which is given by the right matrix multiplication: $M  g$, $M \in HV_{n,k}$,  $g \in \mathbf{U}(k)$. The quotient space by this action 
$HG_{n,k}:=HV_{n,k} / \mathbf{U}(k)$ is called the \textit{complex hyperbolic Grassmannian manifold} and we shall denote by $\pi$ the projection $HV_{n,k}\to HG_{n,k}$. 
Besides, $HG_{n,k}$ is a non-compact complex manifold with complex dimension $k(n-k)$. Actually, as a  manifold, $HG_{n,k}$ is diffeomorphic to $\C^{(n-k)\times k}$. Indeed, consider the smooth map $p : HV_{n,k} \to \C^{(n-k)\times k}$ given by $p \begin{pmatrix} X  \\ Z \end{pmatrix}  = X Z^{-1}.$ It is clear that for every $g \in \mathbf{U}(k)$ and $M \in HV_{n,k}$, $p( M  g)= p(M)$. Since $p$ is a submersion from $HV_{n,k}$ onto its image $ p ( HV_{n,k}) = \C^{(n-k)\times k} $ we easily deduce that there exists a unique  diffeomorphism $\Psi$ from   $HG_{n,k}$ onto $\C^{(n-k)\times k}$ such that 
\begin{align}\label{inh-coord}
 \Psi \circ \pi= p.
 \end{align}
 The map $\Psi$ induces a global coordinate chart on $HG_{n,k}$ that we call inhomogeneous and through those global coordinates, we will often simply identify $HG_{n,k} \simeq \C^{(n-k)\times k}$ .
 
 For the purpose of constructing and studying the Brownian motion on $HG_{n,k}$ we need to equip $HG_{n,k}$ with a Riemannian metric. To proceed, set 
  \[
 I_{n-k,k} :=\begin{pmatrix} I_{n-k}   & 0 \\ 0 & -I_{k} \end{pmatrix}.
 \]
and note that $A \in \mathfrak{u}(n-k,k)$ if and only if $I_{n-k,k}A$ is skew-Hermitian. Therefore, 
 $$\langle A, B\rangle_{\mathfrak{u}(n-k,k)}=-\frac12 \tr(I_{n-k,k}AI_{n-k,k}B),$$
is a non-degenerate $Ad$-invariant inner product on $\mathfrak{u}(n-k,k)$ where $Ad$ is the adjoint action of $U(n-k,k)$ on $\mathfrak{u}(n-k,k)$.  Moreover, an orthonormal basis of $\mathfrak{u}(n-k,k)$ with respect to this inner product is given by
\begin{align*}
 &\{ T_\ell, 1 \le \ell \le n \} \\
&\cup \{E_{\ell j}-E_{j \ell}, i(E_{\ell j}+E_{j\ell}),  1\le \ell <j \le n-k, n-k+1\le \ell <j \le n\}\\
&\cup\{E_{\ell j}+E_{j \ell}, i(E_{\ell j}-E_{j\ell}), 1\le \ell\le n-k, n-k+1\le j\le n\}
\end{align*}
where $E_{j\ell}=(\delta_{j\ell}(k,h))_{1\le k,h\le n}$ and $T_j=\sqrt{2}i E_{jj}$. In this way, we obtain a bi-invariant Riemannian structure on $\mathbf{U}(n-k, k)$. 

The inner product above also equips the coset space $HV_{n,k} \simeq \mathbf{U}(n-k,k) / \mathbf{U}(n-k)$ with the unique Riemannian structure such that  the map $ M \in \mathbf{U}(n-k, k) \to M \begin{pmatrix} 0 \\ I_k \end{pmatrix} \in HV_{n,k} $ is a Riemannian submersion. Finally, we can equip the hyperbolic Grassmannian $HG_{n,k} $  with the Riemannian metric such that the map 
\begin{equation*}
p: \begin{pmatrix} X  \\ Z \end{pmatrix}  \in HV_{n,k} \mapsto X Z^{-1} \in HG_{n,k}
\end{equation*}
 is a Riemannian submersion with totally geodesic fibers isometric to $\mathbf{U}(k)$. Equipped with that metric, $HG_{n,k} $ is a complex Riemannian symmetric of non compact type and rank $\min (k,n-k) $. It is the dual symmetric space of the complex Grassmannian space $G_{n,k} $ that was considered in \cite{BW-Winding}.

We therefore have a fibration
\[
\mathbf{U}(k) \to HV_{n,k} \to HG_{n,k}
\]
which we will  referred to as the hyperbolic Stiefel fibration. Note that this fibration is a special case of Theorem 9.80 in \cite{Besse} with $G=\mathbf{U}(n-k,k)$, $H=\mathbf{U}(n-k)\mathbf{U}(k)$, $K=\mathbf{U}(n-k)$.

For $k=1$, $HV_{n,1}$ is isometric to the complex anti-de Sitter space  $\mathbf{AdS}^{2n-1}$ equipped with its Riemannian metric, $HG_{n,1}$ is the complex hyperbolic space $\mathbb{C} H^{n-1}$ and the above fibration is the anti-de Sitter fibration considered in \cite{BW-SWHF}.  

On the other hand, due to the symmetric space representation $HG_{n,k} \simeq \mathbf{U}(n-k,k)/ \mathbf{U}(n-k)\mathbf{U}(k)$, one has the duality $HG_{n,k} \simeq HG_{n,n-k}$ which lead to the fibration
\[
\mathbf{U}(n-k) \to HV_{n,n-k} \to HG_{n,n-k}.
\]
Therefore without loss of generality and unless specified otherwise, we will always assume throughout the paper that $k \le n-k$. As such, the rank of $HG_{n,k}$ as a symmetric space is $k$.

 \subsection{Brownian motion on $HG_{n,k}$}
 
 In this section, we study the Brownian motion on $HG_{n,k}$ and show how it can be constructed from a Brownian motion on the indefinite Lie group $\mathbf{U}(n-k,k)$.
 
  A Brownian motion $(A_t)_{t\ge0}$ on the Lie algebra $\mathfrak{u}(n-k,k)$ is  given by
\begin{align}\label{eq-A_t}
A_t &=\sum_{1\le \ell<j\le n-k}(E_{\ell j}-E_{j\ell})B^{\ell j}_t+i(E_{\ell j}+E_{j\ell})\tilde{B}^{\ell j}_t+\sum_{j=1}^k T_{j}\hat{B}^j_t\notag\\
&+\sum_{n-k+1\le \ell<j\le n}(E_{\ell j}-E_{j\ell})B^{\ell j}_t+i(E_{\ell j}+E_{j\ell})\tilde{B}^{\ell j}_t+\sum_{j=n-k+1}^n T_{j}\hat{B}^j_t\\
&+\sum_{1\le \ell \le n-k}\sum_{n-k+1\le j\le n}(E_{\ell j}+E_{j\ell})B^{\ell j}_t+i(E_{\ell j}-E_{j\ell})\tilde{B}^{\ell j}_t\notag
\end{align}
where $B^{\ell j}$, $\tilde{B}^{\ell j}$, $\hat{B}^j$ are  independent standard real Brownian motions.
In the following, we will  use the block notations as below: For any $U\in \mathbf{U}(n-k,k)$ 
\[
U=\begin{pmatrix}  Y & X \\ W & Z \end{pmatrix},\quad
\]
where $X\in \C^{(n-k)\times k}$, $Y\in \C^{(n-k)\times (n-k)}$, $Z\in \C^{k\times k}$, $W\in \C^{k\times (n-k)}$. We note then  that  $\begin{pmatrix} X  \\ Z \end{pmatrix}  \in HV_{n,k}$.
The block notation for $A\in\mathfrak{u}(n-k,k)$ is 
\[
A=\begin{pmatrix} \ep & \gamma \\  \beta  &  \alpha \end{pmatrix},\quad
\]
where $\ep\in \C^{(n-k)\times (n-k)}$, $\beta\in \C^{k\times (n-k)}$, $\gamma\in \C^{(n-k)\times k}$, $\alpha\in \C^{k\times k}$. Clearly $\ep$ and $\alpha$ are skew-Hermitian and 
\[
\beta^*=\gamma.
\]
Let $(U_t)_{t\ge0}$ be a $\mathbf{U}(n-k,k)$-valued stochastic process that satisfies the Stratonovich differential equation
\begin{equation}\label{eq-bm-sde}
\begin{cases}
dU_t=U_t\circ dA_t ,\\
U_0=\begin{pmatrix}  Y_0 & X_0 \\ W_0 & Z_0 \end{pmatrix}.
\end{cases}
\end{equation}
\begin{definition}
The process $(U_t)_{t \ge 0}$ is a Brownian motion  on $\mathbf{U}(n-k,k)$ starting from $U_0$.
\end{definition}
%
%

 The main theorem of the section is the following:

\begin{theorem}\label{main:s1}
Let $U_t=\begin{pmatrix}  Y_t & X_t \\ W_t & Z_t \end{pmatrix}$ be a Brownian motion on the Lie group $\mathbf{U}(n-k,k)$.  Then, the process $(w_t)_{t \ge 0}:=\left( X_t  Z_t^{-1}\right)_{t \ge 0} $ is a diffusion process  with  generator given by the diffusion operator $\frac12\Delta_{HG_{n,k}}$, where 
\begin{align*}
\Delta_{HG_{n,k}}&={{4}}\sum_{1\le i, i'\le n-k, 1\le j, j'\le k}(I_{n-k}-ww^*)_{ii'} (I_k-w^*w)_{j'j}\frac{\partial^2}{\partial w_{ij}{\partial \overline{w}_{i'j'}}}.
\end{align*}
Moreover, $(w_t)_{t \ge 0}$ is a Brownian motion in $HG_{n,k}$, i.e. $\Delta_{HG_{n,k}}$ is the Laplace-Beltrami operator on $HG_{n,k}$ expressed in inhomogeneous coordinates.
\end{theorem}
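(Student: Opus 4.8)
The plan is to realise $w=XZ^{-1}$ as a smooth functional of the matrix Brownian motion and to read off its generator from an explicit It\^o equation. First I would observe that on $HV_{n,k}$ the Stiefel relation $Z^*Z = I_k + X^*X \succeq I_k$ forces $Z_t$ to stay invertible for all $t$, so $w_t$ is globally well defined. Writing $A$ in the block form $\begin{pmatrix}\epsilon & \gamma\\ \beta & \alpha\end{pmatrix}$ and reading off the top-right and bottom-right blocks of $dU = U\circ dA$ gives the Stratonovich equations $dX = Y\circ d\gamma + X\circ d\alpha$ and $dZ = W\circ d\gamma + Z\circ d\alpha$. Applying the Stratonovich chain rule to $w = XZ^{-1}$, with $d(Z^{-1}) = -Z^{-1}\circ(dZ)\circ Z^{-1}$, the two terms in $d\alpha$ cancel because $wZ = X$, leaving the clean expression
\[
dw = (Y - wW)\circ d\gamma\circ Z^{-1}.
\]

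Next I would compute the quadratic variation from the martingale part $dw^m = (Y-wW)\,d\gamma\,Z^{-1}$, using that the entries of $\gamma$ are independent complex Brownian motions, so $d\langle \gamma_{ab},\overline{\gamma_{cd}}\rangle = 2\delta_{ac}\delta_{bd}\,dt$ and $d\langle\gamma_{ab},\gamma_{cd}\rangle = 0$. This gives $d\langle w_{ij}, w_{i'j'}\rangle = 0$ and $d\langle w_{ij},\overline{w_{i'j'}}\rangle = 2\,[(Y-wW)(Y-wW)^*]_{ii'}\,[(ZZ^*)^{-1}]_{j'j}\,dt$. The geometric content is then extracted from two algebraic identities derived from the defining relations $U^*I_{n-k,k}U = I_{n-k,k}$ and $UI_{n-k,k}U^* = I_{n-k,k}$ (equivalently $YY^*-XX^* = I_{n-k}$, $WW^*-ZZ^* = -I_k$, $YW^* = XZ^*$ and $X^*X - Z^*Z = -I_k$), namely
\[
(Y-wW)(Y-wW)^* = I_{n-k} - ww^*, \qquad (ZZ^*)^{-1} = I_k - w^*w.
\]
Substituting these yields $d\langle w_{ij},\overline{w_{i'j'}}\rangle = 2(I_{n-k}-ww^*)_{ii'}(I_k - w^*w)_{j'j}\,dt$; since the bracket depends only on $w$, the process solves the martingale problem for an operator whose second-order part is exactly that of $\tfrac12\Delta_{HG_{n,k}}$.

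The main obstacle is to show that the It\^o drift of $w$ vanishes, since the stated operator has no first-order term. Writing $dw_{ij} = \sum_{a,b} M_{ia}(Z^{-1})_{bj}\circ d\gamma_{ab}$ with $M = Y-wW$ and converting to It\^o, the drift is $\tfrac12\sum_{a,b} d\langle M_{ia}(Z^{-1})_{bj},\gamma_{ab}\rangle$. I would compute the martingale parts $dM^m = M\,d\epsilon - M\,d\gamma\,Z^{-1}W$ (the $d\beta$-contributions cancel, again because $wZ = X$) and $d(Z^{-1})^m = -Z^{-1}W\,d\gamma\,Z^{-1} - d\alpha\,Z^{-1}$, and observe that both involve only $d\epsilon$, $d\alpha$ and $d\gamma$. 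Since $\gamma$ is holomorphic one has $d\langle\gamma,\gamma\rangle = 0$, and since $\epsilon,\alpha$ are built from Brownian motions independent of $\gamma$, every bracket against $\gamma_{ab}$ vanishes; the point is that no $\beta = \gamma^*$ survives in $M^m$ or $(Z^{-1})^m$. Hence the drift is identically zero, $w$ is a local martingale, and its generator is precisely $2\sum (I_{n-k}-ww^*)_{ii'}(I_k-w^*w)_{j'j}\,\tfrac{\partial^2}{\partial w_{ij}\,\partial\overline{w_{i'j'}}} = \tfrac12\Delta_{HG_{n,k}}$.

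Finally, for the identification with the Laplace--Beltrami operator I would invoke the geometry set up in the previous subsection: the last $k$ columns $\binom{X_t}{Z_t}$ of $U_t$ form a Brownian motion on $HV_{n,k}\simeq \mathbf{U}(n-k,k)/\mathbf{U}(n-k)$ (the bi-invariant group Brownian motion projects to Brownian motion on the homogeneous space), and $p$ is a Riemannian submersion with totally geodesic fibers. Since the projection of Brownian motion under a Riemannian submersion with totally geodesic (hence minimal) fibers is again a Brownian motion on the base, $w_t = p\binom{X_t}{Z_t}$ is a Brownian motion on $HG_{n,k}$, so its generator is one half of the Laplace--Beltrami operator. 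Comparing with the previous step identifies the explicit operator $\Delta_{HG_{n,k}}$ with the Laplace--Beltrami operator; equivalently, the vanishing of the first-order part is the familiar fact that on the K\"ahler manifold $HG_{n,k}$ the Laplacian equals $2g^{i\bar j}\partial_i\partial_{\bar j}$ in holomorphic coordinates.
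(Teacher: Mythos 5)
Your proposal is correct and follows essentially the same route as the paper: derive the block Stratonovich SDE from $dU = U\circ dA$, use the cancellation $wZ=X$ to reduce to $dw = (Y-wW)\circ d\gamma\, Z^{-1}$, compute the quadratic covariations and simplify them via the defining relations of $\mathbf{U}(n-k,k)$ to get $(Y-wW)(Y-wW)^* = I_{n-k}-ww^*$ and $(ZZ^*)^{-1} = I_k-w^*w$, and identify the operator with the Laplace--Beltrami operator through the Riemannian submersions with totally geodesic fibers. The only difference is bookkeeping: the paper converts to It\^o form early and checks that the finite-variation terms cancel through the $d\alpha\,d\alpha$ contributions, whereas you stay in Stratonovich form and kill the drift by observing that no $\beta = \gamma^*$ terms survive in the martingale parts of $Y-wW$ and $Z^{-1}$ while $d\gamma\,d\gamma = 0$ --- both cancellations rest on the same identity $wZ = X$.
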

\begin{proof}
First, we note that both the maps 
\begin{equation*}
p_1: \begin{pmatrix}  Y & X \\ W & Z \end{pmatrix} \in \mathbf{U}(n-k,k) \to \begin{pmatrix} X  \\ Z \end{pmatrix}  \in HV_{n,k}
\end{equation*}
and $p: HV_{n,k} \rightarrow \mathbb{C}^{(n-k) \times k} \sim HG_{n,k}$ are Riemannian submersions with totally geodesic fibers. Thus, they transform Brownian motions into Brownian motions so that $w_t=(p \circ p_1)(U_t)$ is indeed a Brownian motion on $HG_{n,k}$. It remains to compute explicitly its generator.

From \eqref{eq-bm-sde} we have
\begin{align}\label{eq-H-matrix-sde}
dX &=X \circ d\alpha +Y \circ d\gamma =X  d\alpha +Y d\gamma +\frac12 (dX  d\alpha +dY  d\gamma )\nonumber \\
dY &=X \circ d\beta +Y \circ d\ep =X  d\beta +Y  d\ep +\frac12 (dX  d\beta +dY  d\ep)\nonumber\\
dZ &= Z \circ d\alpha +W \circ d\gamma =Z  d\alpha +W d\gamma +\frac12 (dZ  d\alpha +dW  d\gamma )\\
dW &=Z \circ d\beta +W \circ d\ep =Z  d\beta +W  d\ep +\frac12 (dZ  d\beta +dW  d\ep ).\nonumber
\end{align}
Moreover form \eqref{eq-A_t} we have that
\[
d\alpha d\alpha=-2kI_kdt,\quad d\ep d\ep=-2(n-k)I_{n-k}dt
\]
and 
\[
d\beta d\gamma= d\beta d\beta^* = 2(n-k)I_kdt,\quad d\gamma d\beta=2k I_{n-k}dt.
\]
We then have
\begin{align*}
dX &=X \circ d\alpha +Y \circ d\gamma =X  d\alpha +Y  d\gamma +(n-2k) X dt \\
dZ &= Z \circ d\alpha +W \circ d\gamma =Z  d\alpha +W  d\gamma +(n-2k) Z dt.
\end{align*}

Now the process $w=XZ^{-1}$ satisfies:
\[
dw =dX  Z^{-1}+X dZ^{-1}+dXdZ^{-1}.
\]
But $Z dZ^{-1}=-dZ  Z^{-1}-dZdZ^{-1}$ whence 
we have
\begin{align}\label{eq-dw}
&dw = dX  Z ^{-1}-w dZ  Z^{-1}-w dZdZ^{-1}+dX dZ^{-1}\nonumber \\
&=(X d\alpha+Y d\gamma+(n-2k)X dt) Z^{-1}-w (Z d\alpha+W d\gamma+(n-2k)Zdt) Z^{-1}-w dZdZ^{-1}+dXdZ^{-1}\nonumber  \\
&=Y d\gamma Z^{-1}-w W d\gamma Z^{-1}-w dZdZ^{-1}+dX dZ^{-1}.
\end{align}
Note that for the finite variation part of $dw$ we have
\begin{align*}
&-w dZdZ^{-1}+dX dZ ^{-1}=w dZ Z^{-1}dZ Z^{-1}-dX\, Z^{-1}dZ\, Z^{-1}\\
&\qquad=w(Zd\alpha+Wd\gamma)Z^{-1}(Zd\alpha+Wd\gamma)Z^{-1}-(X d\alpha+Y d\gamma)Z^{-1}\, (Z d\alpha+W d\gamma)Z^{-1}\\
&\qquad=w Z d\alpha d\alpha Z^{-1}-X(d\alpha d\alpha)Z^{-1}=0.
\end{align*}
Hence we have
\[
dw=Y d\gamma Z^{-1}-w W d\gamma Z^{-1}.
\]
We can now prove that $w$ is a matrix diffusion process using the above formula. Since for $1\le i\le n-k$, $1\le j\le k$,
\[
dw_{ij}=\sum_{\ell=1}^{n-k}(Y-w W )_{i\ell} (d\gamma Z^{-1})_{\ell j},
\]
we have
\[
dw_{ij}d\overline{w}_{i'j'}=\sum_{\ell,m=1}^{n-k}(Y-w W )_{i\ell}(\overline{Y}-\overline{w W} )_{i'm} (d\gamma Z^{-1})_{\ell j}(d\overline{\gamma} \overline{Z}^{-1})_{m j'}.
\]
But
\[
(d\gamma Z^{-1})_{\ell j}(d\overline{\gamma} \overline{Z}^{-1})_{m j'}
=\sum_{p,q=1}^k (d\gamma)_{\ell p}(Z^{-1})_{p j}(d\overline{\gamma})_{mq} (\overline{Z}^{-1})_{qj'}=2\delta_{m\ell} dt\sum_{p=1}^k(Z^{-1})_{p j} (\overline{Z}^{-1})_{pj'},
\]
whence
\begin{align}\label{eq-dwdw-mid}
dw_{ij} d\overline{w}_{i'j'}
&=2 ((Y-w W )\overline{(Y-w W)}^T )_{ii'}((ZZ^*)^{-1})_{j'j}dt.
\end{align}
Now, if $M \in U(n-k, k)$ if and only if:
$$ \begin{pmatrix} Y^*   & W^* \\ X^* & Z^* \end{pmatrix} \begin{pmatrix} I_{n-k}   & 0 \\ 0 & -I_{k} \end{pmatrix}\begin{pmatrix}  Y & X \\ W & Z \end{pmatrix}=
\begin{pmatrix}  Y & X \\ W & Z \end{pmatrix} \begin{pmatrix} I_{n-k}   & 0 \\ 0 & -I_{k} \end{pmatrix}\begin{pmatrix} Y^*   & W^* \\ X^* & Z^* \end{pmatrix}=\begin{pmatrix} I_{n-k}   & 0 \\ 0 & -I_{k} \end{pmatrix},$$ 
Equivalently,

\begin{equation}\label{eq-relations1}
X^*X-Z^*Z=-I_{k}, \quad X^*Y-Z^*W=0,\quad Y^*Y-W^*W=I_{n-k}
\end{equation}
and
\begin{equation}\label{eq-relations}
YY^*-XX^*=I_{n-k}, \quad ZX^*-WY^*=0,\quad WW^*-ZZ^*=-I_{k}.
\end{equation}

From \eqref{eq-relations}, we get the following relations
\[
wWY^* = XZ^{-1}WY^*=XX^*, \quad I_k-w^*w = I_k - (Z^{-1})^*X^*XZ^{-1} = (Z^{-1})^*Z^{-1}, 
\]
which give after substitution in \eqref{eq-dwdw-mid}:
\begin{align}\label{eq-dwdw}
dw_{ij}d\overline{w}_{i'j'}&=2(I_{n-k}-ww^*)_{ii'}((ZZ^*)^{-1})_{j'j}dt\nonumber \\
&=2(I_{n-k}-ww^*)_{ii'}(I_k-w^*w)_{j'j}dt.
\end{align}
Therefore, we conclude that $(w_t)_{t\ge0}$ is a diffusion whose generator is given by $\frac12\Delta_{HG_{n,k}}$.
\end{proof}

\begin{remark}
When $k=1$, the diffusion operator $\Delta_{HG_{n,1}}$ coincides with the Laplacian of $\mathbb{CH}^n$ in inhomogeneous coordinates whose formula was already known (see for instance  \cite{BW-SWHF}).
\[
\Delta_{\mathbb{C}H^{n-1}}=4(1-|w|^2)\sum_{k=1}^{n-1} \frac{\partial^2}{\partial w_k \partial\overline{w_k}}- 4(1-|w|^2)\mathcal{R} \overline{\mathcal{R}}
\]
where
\[
\mathcal{R} :=\sum_{j=1}^{n-1} w_j \frac{\partial}{\partial w_j}.
\] 
\end{remark}

\subsection{Invariant measure}
Since $HG_{n,k}$ is a non-compact symmetric space, its Riemannian volume measure has infinite mass. Our goal in this section will be to compute explicitly its density measure in inhomogeneous coordinates up to a scalar multiple. We will take advantage of the explicit formula for $\Delta_{HG_{n,k}}$ obtained in Theorem \ref{main:s1}. Consider in inhomogeneous coordinates the following measure
\[
d\mu:=\det(I_k-w^*w)^{-n} dm := \rho(w) dm
\]
where $m$ denotes the Lebesgue measure on $HG_{n,k  }\simeq \C^{(n-k)\times k}$ and we omit the dependence on $n$ for sake of simplicity.

In the proposition below we show that $\mu$ is a symmetric and invariant measure for $\Delta_{HG_{n,k}}$.
\begin{proposition}\label{invariant1}
The measure $\mu$ is invariant and symmetric for the operator $(1/2)\Delta_{HG_{n,k}}$. Namely for every smooth and compactly supported functions $f,g$ on $\C^{(n-k)\times k}$, we have: 
\[
\int (\Delta_{HG_{n,k}}f) g\, d\mu = \int f(\Delta_{HG_{n,k}}g)\, d\mu=-\int \Gamma_{\Delta_{HG_{n,k}}}(f, g)\, d\mu,
\]
where the carr\'e du champ operator 
\[
\Gamma_{\Delta_{HG_{n,k}}}(f,g):=\frac{1}{2} \left( \Delta_{HG_{n,k}}(fg) - (\Delta_{HG_{n,k}}f) g - (\Delta_{HG_{n,k}}g) f \right)
\]
 is explicitly given by
\begin{equation}\label{eq-Gamma}
\Gamma_{\Delta_{HG_{n,k}}}(f,g)=2\sum_{1\le i, i'\le n-k, 1\le j, j'\le k}(I_{n-k}-ww^*)_{ii'}(I_k-w^*w)_{j'j}\bigg(\frac{\partial f}{\partial w_{ij}}\frac{\partial g}{\partial \bw_{i'j'}}+\frac{\partial g}{\partial w_{ij}}\frac{\partial f}{\partial \bw_{i'j'}}\bigg).
\end{equation}
\end{proposition}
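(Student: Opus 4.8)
The plan is to put $\Delta_{HG_{n,k}}$ into divergence form with respect to $\mu$ and then integrate by parts. Write $a := I_{n-k}-ww^*$ and $b := I_k-w^*w$, both Hermitian, so that $\Delta_{HG_{n,k}}=4\sum_{i,i',j,j'}a_{ii'}b_{j'j}\,\frac{\partial^2}{\partial w_{ij}\,\partial\bw_{i'j'}}$ and $\rho=\det(b)^{-n}$. The formula \eqref{eq-Gamma} for $\Gamma_{\Delta_{HG_{n,k}}}$ costs nothing: since the operator carries no first-order part, applying the Leibniz rule twice to $\frac{\partial^2(fg)}{\partial w_{ij}\partial\bw_{i'j'}}$ and subtracting $(\Delta_{HG_{n,k}}f)g+f\,\Delta_{HG_{n,k}}g$ leaves exactly the two cross terms $\frac{\partial f}{\partial w_{ij}}\frac{\partial g}{\partial\bw_{i'j'}}+\frac{\partial g}{\partial w_{ij}}\frac{\partial f}{\partial\bw_{i'j'}}$, contracted against $4a_{ii'}b_{j'j}$ and halved. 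No integration is needed here.

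The heart of the matter is the claim that
\[
\Delta_{HG_{n,k}}f=\frac{4}{\rho}\sum_{i,i',j,j'}\frac{\partial}{\partial\bw_{i'j'}}\Big(\rho\,a_{ii'}b_{j'j}\,\frac{\partial f}{\partial w_{ij}}\Big),
\]
which, after expanding the outer derivative and cancelling the genuine second-order term against $\Delta_{HG_{n,k}}f$, is equivalent to the divergence identity
\[
\sum_{i'=1}^{n-k}\sum_{j'=1}^{k}\frac{\partial}{\partial\bw_{i'j'}}\big(\rho\,a_{ii'}b_{j'j}\big)=0\qquad(1\le i\le n-k,\ 1\le j\le k).
\]
To establish it I will compute the three contributions from the product rule. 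Using $\frac{\partial}{\partial\bw_{i'j'}}\log\det b=-(wb^{-1})_{i'j'}$ gives $\frac{\partial\rho}{\partial\bw_{i'j'}}=n\,\rho\,(wb^{-1})_{i'j'}$; together with $\frac{\partial a_{ii'}}{\partial\bw_{i'j'}}=-w_{ij'}$ and $\frac{\partial b_{j'j}}{\partial\bw_{i'j'}}=-w_{i'j}$, the three pieces sum, after the inner contractions, to $\rho\big[\,n\,(aw)_{ij}-(n-k)(wb)_{ij}-k\,(aw)_{ij}\,\big]$; the first summand folds $(wb^{-1}b)_{i'j}=w_{i'j}$, while the counts $n-k$ and $k$ arise because the derivatives of $a_{ii'}$ and $b_{j'j}$ are independent of the summation indices $i'$ and $j'$ respectively. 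The matrix identity $aw=wb=w-ww^*w$ then collapses everything to $[\,n-(n-k)-k\,]\,\rho\,(aw)_{ij}=0$. This cancellation is the crux of the proof and is precisely what forces the exponent in $\rho$ to be $n$.

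With the divergence form in hand, integration by parts — no boundary terms arise, since $f,g$ have compact support and $\rho$ is smooth and strictly positive on the coordinate domain — yields
\[
\int(\Delta_{HG_{n,k}}f)\,g\,d\mu=-4\,Q(f,g),\qquad Q(f,g):=\int\rho\sum_{i,i',j,j'}a_{ii'}b_{j'j}\,\frac{\partial f}{\partial w_{ij}}\,\frac{\partial g}{\partial\bw_{i'j'}}\,dm,
\]
and likewise $\int f\,(\Delta_{HG_{n,k}}g)\,d\mu=-4\,Q(g,f)$, while \eqref{eq-Gamma} gives directly $\int\Gamma_{\Delta_{HG_{n,k}}}(f,g)\,d\mu=2\big(Q(f,g)+Q(g,f)\big)$. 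It therefore remains only to check $Q(f,g)=Q(g,f)$. This follows from a conjugation argument: since $\overline{a_{ii'}}=a_{i'i}$, $\overline{b_{j'j}}=b_{jj'}$, $\rho$ is real, and $f,g$ may be taken real (the operator $\Delta_{HG_{n,k}}$ preserves real functions), relabelling $i\leftrightarrow i'$, $j\leftrightarrow j'$ in $\overline{Q(f,g)}$ produces $Q(g,f)$; as $Q(f,g)$ is itself real, the two coincide. Combining the three displays gives the full chain of equalities, and symmetry with respect to $\mu$ entails that $\mu$ is invariant for $\tfrac12\Delta_{HG_{n,k}}$ (formally the case $g\equiv1$, justified by an exhaustion argument).

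In summary, the only genuine obstacle is the divergence identity; every other step is routine bookkeeping. Its verification rests on the logarithmic-derivative computation for $\det(I_k-w^*w)^{-n}$ and on the identity $aw=wb$, after which the scalar cancellation $n-(n-k)-k=0$ does all the work.
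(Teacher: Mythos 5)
Your proof is correct and takes essentially the same route as the paper's: both rest on integration by parts against the explicit density $\rho=\det(I_k-w^*w)^{-n}$, with the key point being the identical cancellation $n-(n-k)-k=0$ obtained from the logarithmic derivative of $\det(I_k-w^*w)$ and the identity $(I_{n-k}-ww^*)w=w(I_k-w^*w)$. The only difference is organizational: the paper symmetrizes the integration by parts over both orders of the mixed derivatives and shows the remainder $R$ vanishes, whereas you package the same computation as a divergence-form identity for $\Delta_{HG_{n,k}}$ plus a conjugation/relabelling argument for the symmetry $Q(f,g)=Q(g,f)$.
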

\begin{proof}
For ease of notations, we further set:  
\begin{equation*}
\partial_{ij} :=\frac{\partial }{\partial w_{ij}}, \quad  \bpartial_{i j} :=\frac{\partial }{\partial \overline{w}_{ij}}, \quad A_{ii'jj'} := (I_{n-k}-ww^*)_{ii'}(I_k-w^*w)_{j'j}.
\end{equation*}
so that 
\[
\Gamma_{\Delta_{HG_{n,k}}}(f,g) = 2\sum_{1\le i, i'\le n-k, 1\le j, j'\le k}A_{ii'jj'}\bigg((\partial_{ij}f)(\bpartial_{i'j'}g)+(\partial_{ij}g)(\bpartial_{i'j'}f)\bigg).
\]
By integration by parts, we have
\begin{align*}
&-\frac{1}{2}\int (\Delta_{HG_{n,k}}f) g\, d\mu \\
=& \sum_{1\le i, i'\le n-k, 1\le j, j'\le k} \int (\partial_{ij}f)\, \bpartial_{i'j'} (A_{ii'jj'}g\rho)dm+\int (\bpartial_{i'j'}f)\, \partial_{ij} (A_{ii'jj'}g\rho)dm\\
=&\frac{1}{2}\int \Gamma_{\Delta_{HG_{n,k}}}(f, g)\, d\mu + R 
\end{align*}
where 
\begin{multline*}
R := \sum_{1\le i, i'\le n-k, 1\le j, j'\le k} \bigg(\int [(\partial_{ij}f)\,(\bpartial_{i'j'} A_{ii'jj'})+(\bpartial_{i'j'}f)\,(\partial_{ij} A_{ii'jj'})]g\rho dm \\
+\int [(\partial_{ij}f)\,(\bpartial_{i'j'}\rho)+(\bpartial_{i'j'}f)\,(\partial_{ij}\rho)]gA_{ii'jj'} dm\bigg).
\end{multline*}
Hence, it remains to prove that $R=0$. To this end, we use the relations: 
\begin{align*}
\bpartial_{i'j'} A_{ii'jj'}=-w_{ij'}(\delta_{j'j}-(w^*w)_{j'j})-(\delta_{ii'}-(ww^*)_{ii'})w_{i'j},
\end{align*}
and
\[
\partial_{ij} A_{ii'jj'}=-\bw_{i'j}(\delta_{j'j}-(w^*w)_{j'j})-(\delta_{ii'}-(ww^*)_{ii'})\bw_{ij'}
\]
to get
\[
\sum_{1\le i'\le n-k, 1\le j'\le k} \bpartial_{i'j'} A_{ii'jj'}=-n({w}(I_k-J))_{ij}
\]
and
\[
\sum_{1\le i\le n-k, 1\le j\le k} \partial_{ij} A_{ii'jj'}=-n({\bw}(I_k-\bar{J}))_{i'j'}.
\]
Moreover, since
\begin{align*}
\bpartial_{i'j'}\det(I_k-J)&=\det(I_k-J)\sum_{1\le p,q\le k}\big((I_k-J)^{-1}\big)_{qp}\bpartial_{i'j'} (I_k-J)_{pq}\\
&=-\det(I_k-J)\bigg(w(I_k-J)^{-1} \bigg)_{i'j'}
\end{align*}
and
\[
\partial_{ij}\det(I_k-J)=-\det(I_k-J)\bigg(\bw(I_k-\bar{J})^{-1} \bigg)_{ij},
\]
then
\begin{align*}
\bpartial_{i'j'} \rho
&=n  \rho\bigg(w(I_k-J)^{-1}\bigg)_{i'j'}, \quad
\partial_{ij} \rho=n  \rho\bigg(\bw(I_k-\bar{J})^{-1} \bigg)_{ij}.
\end{align*}
As a result,
\begin{align*}
\sum_{1\le i'\le n-k, 1\le j'\le k}(\bpartial_{i'j'} A_{ii'jj'})g\rho + (\bpartial_{i'j'}\rho)gA_{ii'jj'}=0
\end{align*}
and
\[
\sum_{1\le i\le n-k, 1\le j\le k}(\partial_{ij} A_{ii'jj'})g\rho + (\partial_{ij}\rho)gA_{ii'jj'}=0,
\]
that is $R=0$ as claimed.
\end{proof}

\subsection{Eigenvalue process}\label{sec-J process}

Let $(w_t)_{t \ge 0}=(X_t Z_t^{-1})_{t \ge 0}$ be a Brownian motion on $HG_{n,k}$ as in Theorem \ref{main:s1}. Let  $J_t= w_t^*w_t \in \mathbb{C}^{k \times k}$ for $t \ge 0$.  We wish to study the eigenvalues process of $J$. The first goal is therefore to write a stochastic differential equation for $J$.

\begin{proposition}
Let $(J_t)_{t \ge 0}$ be given as above. Then, as long as $J_t,  t \geq 0,$ is invertible, there exists a Brownian motion $R$ in $\mathbb{C}^{ k \times k}$ such that:
\begin{equation}\label{SDE-Matrix}
dJ_t= \sqrt{I_k-J_t} dR\sqrt{I_{k} - J_t} \sqrt{J_t} + \sqrt{J_t} \sqrt{I_{k} - J_t}dR_t^{*}\sqrt{I_k-J_t} + 2((n-k) - \tr(J_t))(I_k - J_t) dt. 
\end{equation}
\end{proposition}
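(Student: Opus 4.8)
The plan is to apply Itô's formula to $J=w^*w$ using the martingale dynamics of $w$ established in Theorem \ref{main:s1}, to isolate the drift and the martingale part of $dJ$ separately, and finally to realize the martingale part in the stated form via a martingale representation argument. First I would record what the proof of Theorem \ref{main:s1} already gives: in inhomogeneous coordinates $w$ is a local martingale, $dw=(Y-wW)\,d\gamma\,Z^{-1}$, with brackets
\[
dw_{ij}\,dw_{i'j'}=0, \qquad dw_{ij}\,d\overline{w}_{i'j'}=2(I_{n-k}-ww^*)_{ii'}(I_k-w^*w)_{j'j}\,dt ,
\]
the first identity coming from the vanishing of the holomorphic bracket of $\gamma$, the second being \eqref{eq-dwdw}. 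Writing $J_{jj'}=\sum_i \overline{w}_{ij}w_{ij'}$ and applying the Itô product rule entrywise, $dJ_{jj'}$ splits into a local martingale part $dM_{jj'}=\sum_i\big(w_{ij'}\,d\overline{w}_{ij}+\overline{w}_{ij}\,dw_{ij'}\big)$ and a finite-variation part $\sum_i d\overline{w}_{ij}\,dw_{ij'}$, the latter being the only drift since $w$ (hence $\overline w$) carries no drift.

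Next I would compute the drift. Using the bracket of $w$, one gets $\sum_i d\overline{w}_{ij}\,dw_{ij'}=2\big(\sum_i(I_{n-k}-ww^*)_{ii}\big)(I_k-w^*w)_{jj'}\,dt$, and since $\tr(I_{n-k}-ww^*)=(n-k)-\tr(w^*w)=(n-k)-\tr(J)$ this equals exactly $2\big((n-k)-\tr(J)\big)(I_k-J)_{jj'}\,dt$, which is the stated drift term. Then I would compute the two quadratic variations of the martingale part. Setting $P=I_{n-k}-ww^*$, $Q=I_k-J$, and using $w^*Pw=w^*w-w^*ww^*w=J-J^2=J(I_k-J)=:S$ together with \eqref{eq-relations}, a direct expansion in which only the $\,dw\,d\overline w\,$ pairings survive gives
\[
dM_{jj'}\,dM_{ab}=2\big(S_{aj'}Q_{jb}+S_{jb}Q_{aj'}\big)\,dt,\qquad dM_{jj'}\,d\overline{M_{ab}}=2\big(S_{bj'}Q_{ja}+S_{ja}Q_{bj'}\big)\,dt .
\]

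One then checks that the proposed expression $\sqrt{Q}\,dR\,\sqrt{Q}\sqrt{J}+\sqrt{J}\sqrt{Q}\,dR^{*}\sqrt{Q}$, with $R$ a complex $k\times k$ Brownian motion normalized so that $dR_{pq}\,d\overline{R}_{rs}=2\delta_{pr}\delta_{qs}\,dt$ and $dR_{pq}\,dR_{rs}=0$, reproduces precisely these two brackets. This uses only that $J$, $Q$, $\sqrt J$, $\sqrt Q$ commute and that $\sqrt{J}\sqrt{Q}\,\sqrt{Q}\sqrt{J}=JQ=S$, so the verification is routine bookkeeping. Since $M$ and this Itô stochastic integral have identical quadratic variations, a martingale representation theorem yields a Brownian motion $R$ for which $dM$ has the stated form, completing the identification of \eqref{SDE-Matrix}.

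The main obstacle is this final representation step, not the bracket computations. The linear map $dR\mapsto \sqrt{Q}\,dR\,\sqrt{Q}\sqrt{J}+\sqrt{J}\sqrt{Q}\,dR^{*}\sqrt{Q}$ always takes values in Hermitian matrices, hence is not injective on complex $k\times k$ matrices; in particular one cannot simply define $R$ by inverting $dM$ entrywise. What saves the argument is the invertibility of $J$ (and therefore of $\sqrt{J}$ and $\sqrt{Q}$), which guarantees that the coefficients have full rank and that the prescribed bracket can be realized by a genuine $k\times k$ complex Brownian motion $R$, possibly after enlarging the probability space by an independent noise to supply the extra randomness not seen by $M$. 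This is exactly why the statement is restricted to the set where $J_t$ is invertible, and it is the analogue in the present setting of the reduction-of-noise arguments of \cite{Gra-Mal}; off that set the square roots degenerate and the representation would have to be treated separately.
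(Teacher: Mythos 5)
Your proof is correct, but it follows a genuinely different route from the paper's. You work entirely at the level of $J$: you compute the drift and the covariations of the martingale part $M$ of $J$ directly from the brackets of $w$ (your formulas $dM_{jj'}\,dM_{ab}=2(S_{aj'}Q_{jb}+S_{jb}Q_{aj'})\,dt$ and its conjugate version are exactly right), you check that the proposed integrand reproduces this bracket using the commutation of $J$ and $I_k-J$, and you then invoke a martingale representation theorem (Ikeda--Watanabe type, on a possibly enlarged probability space) to produce $R$. The paper instead applies the representation step one level earlier, to $w$ itself: from \eqref{eq-dwdw} it writes $dw=\sqrt{I_{n-k}-ww^*}\,dB\,\sqrt{I_k-w^*w}$ for a complex Brownian matrix $B$, then uses the polar decomposition $w=Q\sqrt{J}$ and the intertwining relation $Q^*f(I_{n-k}-QJQ^*)=f(I_k-J)\,Q^*$ to rewrite $(dw^*)w+w^*(dw)$, and finally defines $R$ explicitly by $dR:=dB^*Q$, checking directly that this is a $k\times k$ complex Brownian motion. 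The paper's construction is thus constructive---$R$ is identified in terms of the driving noise and the polar factor---and it makes transparent why invertibility of $J$ is assumed: it is precisely what makes the polar factor $Q$ well defined with $Q^*Q=I_k$. Your route buys robustness at the price of explicitness: the abstract representation theorem needs no nondegeneracy (the enlargement supplies whatever noise the rank-deficient coefficient cannot see), so your argument would in fact go through even without invertibility. For that reason your closing paragraph is the one weak point: it is not true that full rank of the coefficients is what ``guarantees that the prescribed bracket can be realized''---realization on an enlarged space requires no rank condition---so invertibility is not what saves your argument; it is simply inherited from the statement and, in the paper's proof, is what the polar decomposition requires. This misattribution does not affect the validity of your proof under the stated hypothesis.
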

\begin{proof}
Since $J=w^*w$ we have
\begin{equation*}
dJ = (dw^*)w + w^*(dw) + (dw^*)(dw). 
\end{equation*}
From \eqref{eq-dwdw}, we readily derive: 
\begin{equation*}
(dw^*)(dw) = 2((n-k) - \tr(J))(I_k - J). 
\end{equation*}
As to the local martingale part of $dJ$, recall that that $I_{k} - w^*w = (Z^{-1})^*Z^{-1}$ is invertible and assume that $J$ is invertible up to time $t > 0$. Then $w$ has a unique polar decomposition $w = Q\sqrt{J}$ where $Q \in \mathbb{C}^{(n-k)\times k}$ satisfies: 
\begin{equation*}
QQ^* = I_{n-k}, \quad Q^*Q = I_k.
\end{equation*}
In particular, $I_{n-k} - ww^*$ is invertible as well since 
\begin{equation*}
I_{n-k} - ww^* = 0 \quad \Leftrightarrow \quad Q(I_k-J)Q^* = 0. 
\end{equation*}
As a matter fact, \eqref{eq-dwdw} implies the existence of a complex Brownian matrix $B \in \mathbb{C}^{(n-k )\times k}$ such that 
\begin{equation*}
dw = \sqrt{I_{n-k} - ww^*} dB \sqrt{I_k-w^*w},
\end{equation*}
whence
\begin{equation}\label{eq-VdV}
(dw^*)w + w^*(dw) = \sqrt{I_k-w^*w} dB^* \sqrt{I_{n-k} - ww^*}w  + w^*\sqrt{I_{n-k} - ww^*} dB \sqrt{I_k-w^*w}. 
\end{equation}
Furthermore,
\begin{align*}
w^*\sqrt{I_{n-k} - ww^*} dB \sqrt{I_k-w^*w} & = \sqrt{J} Q^*\sqrt{I_{n-k} - QJQ^*}dB \sqrt{I_k-w^*w}
\\& = \sqrt{J} \sqrt{I_{k} - J}Q^*dB\sqrt{I_k-J} , 
\end{align*}
and similarly 
\begin{align*}
\sqrt{I_k-w^*w} dB^* \sqrt{I_{n-k} - ww^*}w =  \sqrt{I_k-J} dB^* Q\sqrt{I_{k} - J} \sqrt{J}. 
\end{align*}
Finally, the identity $QQ^* = I_{n-k}$ shows that the matrix-valued process $dR := dB^*Q$ is a Brownian motion process in $\C^{k\times k}$. Altogether, we end up with the following autonomous SDE:
\begin{equation}\label{SDE-Matrix2}
dJ= \sqrt{I_k-J_t} dR\sqrt{I_{k} - J} \sqrt{J} + \sqrt{J} \sqrt{I_{k} - J}dR^{*}\sqrt{I_k-J} + 2((n-k) - \tr(J))(I_k - J) dt,
\end{equation}
as desired. 
\end{proof}
Using Theorem 4 in \cite{Gra-Mal}, we immediately obtain the SDE satisfied by the eigenvalues process $(\lambda_j)_{j=1}^k$ of $J$.
\begin{corollary}
Assume $\lambda_1(0) > \dots > \lambda_k(0) > 0$ and let 
\begin{equation*}
\tau := \inf\{t > 0, \lambda_l(t) = \lambda_j(t)\, \textrm{for some} \, (l,j) \},
\end{equation*}
be the first collision time. Then, for any $1 \leq j \leq k$:
\begin{multline}\label{eq-SDE-lambda}
d\lambda_j = 2\sqrt{\lambda_j}(1-\lambda_j) dN_j + 2\left[(n-k) - \sum_{l=1}^k\lambda_l\right]  {{(1-\lambda_j)}} dt \\ 
+ 2\sum_{l\neq j} \frac{(1-\lambda_l)\lambda_j(1-\lambda_j) + (1-\lambda_j)\lambda_l(1-\lambda_l)}{\lambda_j - \lambda_l} dt
\end{multline}
where $(N_j)_{j=1}^k$ is a $k$-dimensional Brownian motion. 
\end{corollary}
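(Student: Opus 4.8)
The plan is to treat $J$ as a Hermitian matrix-valued diffusion and to invoke Theorem 4 of \cite{Gra-Mal}, which expresses the dynamics of the eigenvalues of such a process in terms of the drift and of the quadratic covariations of the entries, both read off in the instantaneous eigenbasis. Concretely, I would write $J_t = U_t \Lambda_t U_t^*$ with $\Lambda = \mathrm{diag}(\lambda_1,\dots,\lambda_k)$ and $U$ unitary, and decompose the right-hand side of \eqref{SDE-Matrix2} as $dJ = dM + B\,dt$ with martingale part $dM = \sqrt{I_k-J}\,dR\,\sqrt{I_k-J}\sqrt{J} + \sqrt{J}\sqrt{I_k-J}\,dR^*\sqrt{I_k-J}$ and drift $B = 2((n-k)-\tr(J))(I_k-J)$. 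On the set $\{t<\tau\}$ where the spectrum is simple, the eigenvalue maps are smooth and Theorem 4 of \cite{Gra-Mal} gives, for each $j$,
\[
d\lambda_j = (U^* dM\, U)_{jj} + (U^* B U)_{jj}\,dt + \sum_{l\neq j}\frac{d\langle (U^* dM\, U)_{jl},(U^* dM\, U)_{lj}\rangle}{\lambda_j-\lambda_l}.
\]
The whole derivation then reduces to evaluating these three ingredients. The key structural simplification is that $\sqrt{J}$ and $\sqrt{I_k-J}$ commute with $J$, hence are simultaneously diagonalized by $U$; moreover $0<\lambda_j<1$ throughout, since $J=w^*w$ and $I_k-w^*w=(Z^{-1})^*Z^{-1}$ are both positive definite when $J$ is invertible, so all the square roots below are well defined.

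For the martingale part I would set $\hat R := U^* (dR)\,U$. Because conjugation by the unitary $U$ leaves the bracket structure of a complex Brownian matrix invariant (the finite-variation part of $U$ does not contribute to quadratic covariations), $\hat R$ again satisfies $d\langle \hat R_{ab},\overline{\hat R_{cd}}\rangle = 2\delta_{ac}\delta_{bd}\,dt$ and $d\langle \hat R_{ab},\hat R_{cd}\rangle = 0$, the normalization being the one inherited from \eqref{eq-dwdw}. Diagonalizing the square-root factors gives
\[
(U^* dM\, U)_{ij} = \sqrt{(1-\lambda_i)(1-\lambda_j)\lambda_j}\,\hat R_{ij} + \sqrt{\lambda_i(1-\lambda_i)(1-\lambda_j)}\,\overline{\hat R_{ji}}.
\]
In particular $(U^* dM\, U)_{jj} = 2(1-\lambda_j)\sqrt{\lambda_j}\,\mathrm{Re}(\hat R_{jj})$, so its bracket is $4\lambda_j(1-\lambda_j)^2\,dt$; writing this diagonal martingale as $2\sqrt{\lambda_j}(1-\lambda_j)\,dN_j$ identifies the $N_j$, which are mutually independent standard Brownian motions because the $\hat R_{jj}$ are. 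This produces the martingale term in \eqref{eq-SDE-lambda}. Since $B$ is a polynomial in $J$, it is diagonal in the eigenbasis with $(U^*BU)_{jj}=2((n-k)-\sum_l\lambda_l)(1-\lambda_j)$, which is exactly the first drift term.

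It remains to compute the repulsion term. Using the displayed formula for the off-diagonal entries together with the covariance rules for $\hat R$ — only the ``holomorphic against antiholomorphic'' pairings survive — one finds, after a short and routine simplification,
\[
d\langle (U^* dM\, U)_{jl},(U^* dM\, U)_{lj}\rangle = 2\big[(1-\lambda_l)\lambda_j(1-\lambda_j) + (1-\lambda_j)\lambda_l(1-\lambda_l)\big]\,dt .
\]
Dividing by $\lambda_j-\lambda_l$ and summing over $l\neq j$ reproduces the last sum in \eqref{eq-SDE-lambda}, completing the derivation. I expect the main point requiring care to be precisely this last step: keeping track of which pairings of $\hat R_{jl},\hat R_{lj}$ and their conjugates have nonzero bracket, and verifying that conjugation by the semimartingale $U$ preserves the complex-Brownian covariance structure so that the formula of \cite{Gra-Mal} applies verbatim. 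Everything is valid only up to the collision time $\tau$, where simplicity of the spectrum guarantees the smoothness of the eigenvalue functions used in the perturbation formula.
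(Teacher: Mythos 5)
Your proposal is correct and takes essentially the same route as the paper: the paper obtains \eqref{eq-SDE-lambda} by directly invoking Theorem 4 of \cite{Gra-Mal} applied to the matrix SDE \eqref{SDE-Matrix2}, which is exactly what you do, except that you additionally unpack the computation the paper treats as immediate. Your explicit verification of the coefficients (the unitary conjugation $\hat R = U^* dR\, U$ preserving the covariance $d\langle \hat R_{ab},\overline{\hat R_{cd}}\rangle = 2\delta_{ac}\delta_{bd}\,dt$, the diagonal martingale term $2\sqrt{\lambda_j}(1-\lambda_j)\,dN_j$, the drift $2\left[(n-k)-\sum_l \lambda_l\right](1-\lambda_j)$, and the off-diagonal bracket $2\left[(1-\lambda_l)\lambda_j(1-\lambda_j)+(1-\lambda_j)\lambda_l(1-\lambda_l)\right]dt$) is accurate and consistent with the normalization inherited from \eqref{eq-dwdw}.
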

%
%
%

Using the rational transformation,

\[
\rho_j:=\frac{1+\lambda_j}{1-\lambda_j} >1 \quad \Leftrightarrow \quad \lambda_j = \frac{\rho_j-1}{\rho_j+1},
\]
we can prove that: 
\begin{proposition}\label{Collision}
The first collision time $\tau$ is infinite almost surely. 
\end{proposition}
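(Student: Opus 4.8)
The plan is to exploit the change of variables $\rho_j=(1+\lambda_j)/(1-\lambda_j)$ to decouple the noise in \eqref{eq-SDE-lambda}, and then to run a McKean-type Lyapunov argument on a Vandermonde-type function of the $\rho_j$. Working on $[0,\tau)$ one may order $\rho_1>\cdots>\rho_k>1$. Applying It\^o's formula to $\rho_j=(1+\lambda_j)/(1-\lambda_j)$, using $d\langle\lambda_j\rangle=4\lambda_j(1-\lambda_j)^2\,dt$, produces a martingale part $2\sqrt{\rho_j^2-1}\,dN_j$, so that $d\langle\rho_j\rangle=4(\rho_j^2-1)\,dt$, together with a drift of the form
\[
\beta_j=\beta_j^{\mathrm{conf}}+\sum_{l\neq j}\frac{g(\rho_j,\rho_l)}{\rho_j-\rho_l},\qquad g(x,y):=\frac{4(x+1)(xy-1)}{y+1},
\]
where $\beta_j^{\mathrm{conf}}$ is a smooth, non-singular confinement term. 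The decisive structural fact is the identity $g(x,x)=4(x^2-1)$, i.e. $g(\rho_j,\rho_j)$ equals the infinitesimal variance of $\rho_j$. Heuristically this forces each difference $\rho_j-\rho_l$ to behave, near a coincidence, like a Bessel process of dimension $3$, which a.s. never reaches the origin; the steps below make this rigorous.

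Next I would introduce the Lyapunov function
\[
G:=-\sum_{j<l}\log(\rho_j-\rho_l),
\]
so that a collision at time $\tau$ is equivalent to $\limsup_{t\uparrow\tau}G_t=+\infty$. Applying It\^o's formula and grouping the drift of $G$ according to how many indices coincide, the two-body part reads
\[
\sum_{j<l}\frac{\tfrac12(\sigma_j^2+\sigma_l^2)-\big(g(\rho_j,\rho_l)+g(\rho_l,\rho_j)\big)}{(\rho_j-\rho_l)^2},\qquad \sigma_j^2:=4(\rho_j^2-1),
\]
and, using $g(x,x)=\sigma^2(x)$, its numerator tends to $\tfrac12\cdot 8(\rho^2-1)-2\cdot 4(\rho^2-1)=-4(\rho^2-1)<0$ as $\rho_j,\rho_l\to\rho$. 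Thus the two-body part carries the correct negative sign and diverges to $-\infty$ exactly at a collision, which is precisely the repelling effect needed.

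The hard part will be to control the three-body terms $-\sum_{j<l}(\rho_j-\rho_l)^{-1}\sum_{m\neq j,l}\big(\phi_{jm}-\phi_{lm}\big)$ with $\phi_{jm}:=g(\rho_j,\rho_m)/(\rho_j-\rho_m)$, which are singular on every diagonal. Near a two-index coincidence $\rho_j,\rho_l\to\rho$ (the generic first collision, with the remaining $\rho_m$ separated) I would check that the only potentially divergent contributions are those of the spectator pairs $(j,m)$ and $(l,m)$, whose sum is
\[
\frac{1}{\rho_j-\rho_l}\Big[-\frac{g(\rho_j,\rho_l)}{\rho_j-\rho_m}+\frac{g(\rho_l,\rho_j)}{\rho_l-\rho_m}\Big],
\]
and whose bracket is $O(\rho_j-\rho_l)$ because $g(\rho_j,\rho_l),g(\rho_l,\rho_j)\to g(\rho,\rho)$ while $\rho_j-\rho_m,\rho_l-\rho_m\to\rho-\rho_m$; hence each such group stays bounded, and deeper intersections are handled in the same way. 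Consequently, on any region where the $\rho_j$ remain bounded, the drift of $G$ is bounded above by a constant plus the favorable negative two-body term.

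Finally I would conclude by localization. Since $HG_{n,k}$ is a complete Riemannian symmetric manifold, its Brownian motion is non-explosive, so $\max_j\rho_j$ is finite on every finite time interval; setting $\sigma_m:=\inf\{t:\max_j\rho_j(t)\ge m\}$ one has $\sigma_m\uparrow\infty$ almost surely. On $[0,\sigma_m]$ the coefficients $\sigma_j$ and $\beta_j^{\mathrm{conf}}$ are bounded and, by the previous paragraph, the confinement and three-body parts contribute a drift bounded above by some constant $C_m$, while the two-body part is $\le 0$; hence $G_{t\wedge\sigma_m}-C_m(t\wedge\sigma_m)$ is a local supermartingale bounded below. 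An optional-stopping estimate at the exit times $\{G\ge p\}$ then gives $P(\tau\le T\wedge\sigma_m)\le (G_0+C_mT)/p\to 0$ as $p\to\infty$, so no collision occurs before $\sigma_m$; letting $m\to\infty$ yields $\tau=+\infty$ almost surely.
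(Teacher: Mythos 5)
Your overall strategy (an It\^o--Lyapunov estimate for $G=-\log V(\rho)$, in the spirit of C\'epa--L\'epingle type non-collision arguments) is viable, and your structural observations are correct: the interaction kernel is indeed $g(x,y)=4(x+1)(xy-1)/(y+1)$, it satisfies $g(x,x)=\sigma^2(x)$, and the two-body numerator tends to $-4(\rho^2-1)$ at an interior coincidence. But the proof has genuine gaps precisely at the points your sketch defers. First, multiple coincidences: your control of the three-body terms uses that the spectator $\rho_m$ stays separated from the colliding pair, and the resulting bound degenerates like $1/|\rho-\rho_m|$; the sentence ``deeper intersections are handled in the same way'' is therefore false as stated, since the set $\{\max_j\rho_j\le m\}$ contains configurations where three or more particles are arbitrarily close, and there your bound gives no uniform constant $C_m$. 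Near a triple coincidence one must instead exhibit the exact cancellation of the leading $1/\varepsilon^2$ singularity, by freezing the kernel at $g(\rho,\rho)$ and using the identity
\[
\frac{1}{(a-b)(a-c)}+\frac{1}{(b-a)(b-c)}+\frac{1}{(c-a)(c-b)}=0,
\]
and then check that the remaining $O(1/\varepsilon)$ terms are dominated by the negative two-body contribution of order $-4(\rho^2-1)/\varepsilon^2$; this is the hard core of such proofs and is missing. Second, the boundary $\rho=1$: your sign analysis is a limiting statement at interior coincidence points $\rho>1$, whereas the statement you actually invoke in the last paragraph (``the two-body part is $\le 0$'') is a global inequality; at a possible collision at the corner $\rho_j=\rho_l=1$ (two eigenvalues $\lambda$ meeting at $0$) the limit of your numerator is $0$, not negative, so your argument does not cover it. The global inequality does hold: with $u=\rho_j+1,\ v=\rho_l+1\ge 2$ one computes
\[
\tfrac12\bigl(\sigma_j^2+\sigma_l^2\bigr)-g(\rho_j,\rho_l)-g(\rho_l,\rho_j)
=-\frac{2\left[u^3(v-2)+v^3(u-2)\right]}{uv}\le 0,
\]
but this (or an argument excluding boundary collisions) must be proved; it is not a consequence of your limit. (A minor further point: completeness of a Riemannian manifold does not by itself imply non-explosion of its Brownian motion; here non-explosion is better justified by noting that $w_t=X_tZ_t^{-1}$ is globally defined from the group-valued Brownian motion, so each $\lambda_j<1$ and $\rho_j<\infty$ for all times.)

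For comparison, the paper's proof avoids all of these drift estimates. It observes that the generator of $(\rho_j)_{j=1}^k$ is, up to the additive constant $c=k(k-1)(3n+2-4k)/3$, the Vandermonde $h$-transform of $k$ independent copies of the one-dimensional generator $\mathscr{L}^{(k,n)}_u$, because $V$ is an eigenfunction of $\sum_j\mathscr{L}^{(k,n)}_{\rho_j}$. Consequently $L(1/V)=-c/V$, so $e^{ct}/V(\rho_t)$ is a positive continuous local martingale on $[0,\tau)$ which blows up at a collision, and McKean's argument (a continuous local martingale is a time-changed Brownian motion, hence cannot diverge to $+\infty$ in finite time) yields $\tau=\infty$ almost surely. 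This route needs no case analysis near multiple collisions or near $\rho=1$; if you wish to complete your Lyapunov approach, that same eigenfunction identity is also the cleanest way to organize the cancellations you need, since it encodes exactly the balance between the two-body repulsion and the higher-order interaction terms.
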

\begin{proof}
A straightforward application of It\^o's formula yields: 
\begin{align*}
d\rho_j &= 4\frac{\sqrt{\lambda_j}}{(1-\lambda_j)} dN_j + 4\left[(n-1) - \lambda_j\right] \frac{dt}{1-\lambda_j}  + 8 \sum_{l\neq j} \frac{\lambda_l}{\lambda_j - \lambda_l} dt + 8 \frac{\lambda_j}{1-\lambda_j} dt
\\& = 2\sqrt{\rho_j^2-1}dN_j + 2\left[n\rho_j + (n-2) \right] dt + 4(\rho_j+1)\sum_{l\neq j} \frac{(\rho_l-1)}{(\rho_j-\rho_l)} dt
\\& = 2\sqrt{\rho_j^2-1}dN_j + 2\left[(n+2-2k)\rho_j + (n-2k) \right] dt  +4(\rho_j^2-1)\sum_{l\neq j} \frac{1}{(\rho_j-\rho_l)} dt.
\end{align*}
It follows that the generator of the diffusion process $(\rho_j)_{j=1}^k$ acts on smooth functions as: 
\begin{equation*}
L^{(k,n)}_{\rho_1, \dots, \rho_k} = \sum_{j=1}^k\left\{2(\rho_j^2-1)\partial_{j}^2 + 2\left[(n+2-2k)\rho_j + n-2k \right]\partial_j\right\} + 4 \sum_{l,j = 1, l \neq j}^k \frac{(\rho_j^2-1)}{(\rho_j-\rho_l)}\partial_j.
\end{equation*}
Up to a constant, this operator is the Vandermonde transform of $k$ independent diffusions with generator 
\begin{equation*}
\mathscr{L}^{(k,n)}_u := 2(u^2-1)\partial_{u}^2 + 2\left[(n+2-2k)u + n-2k \right]\partial_u.
\end{equation*}
Indeed, the Vandermonde function 
\begin{equation*}
V(\rho_1, \dots, \rho_k) = \prod_{1 \leq l < j \leq k} (\rho_l - \rho_j),  
\end{equation*} 
is positive on the Weyl chamber $\{\rho_1 > \rho_2 > \dots > \rho_k\}$ and satisfies (see Appendix in \cite{Dou}): 
\begin{equation*}
\sum_{j=1}^k\mathscr{L}^{(k,n)}_{\rho_j}V = \frac{k(k-1)(3n+2-4k)}{3}V.
\end{equation*}
Besides, one readily checks that 
\begin{equation}\label{DoobV}
L^{(k,n)}_{\rho_1, \dots, \rho_k}  = \frac{1}{V}\sum_{j=1}^k\mathscr{L}^{(k,n)}_{\rho_j}(V\cdot) - \frac{k(k-1)(3n+2-4k)}{3}.
\end{equation}
In particular, the process 
\begin{equation*}
t \mapsto \frac{1}{V(\rho_1(t), \dots, \rho_k(t))} + \frac{k(k-1)(3n+2-4k)}{3}, \quad t < \tau,  
\end{equation*}
starting at non-colliding particles is a continuous local martingale which blows-up as $t \rightarrow \tau$. Since it is a time-changed Brownian motion, the result follows (this is the classical McKean's argument). 
\end{proof}

We can further relate $(\rho_j)_{j=1}^k$ to a special instance of the so-called radial Heckman-Opdam process associated with the root system of type $BC_k$ (\cite{Sch}). More precisely, the process $(\zeta_j(t), t \geq 0)_{j=1}^k$ defined by: 
\begin{equation*}
\zeta_j(t) = \cosh^{-1}(\rho_j(t/4)), \quad 1 \leq j \leq k,
 \end{equation*}
satisfies
\begin{equation}\label{eq-SDE-coth}
d\zeta_j = dN_j + \frac{1}{2}\coth(\zeta_j)dt + \frac{(n-2k)}{2} \coth\left(\frac{\zeta_j}{2}\right)dt + \frac{1}{2}\sum_{i \neq j}\left[\coth\left(\frac{\zeta_i -\zeta_j}{2}\right) + \coth\left(\frac{\zeta_i + \zeta_j}{2}\right)\right]dt. 
\end{equation}
As a matter of fact, $(\rho_j)_{j=1}^k$ is the unique strong solution of the SDE it satisfies for any starting point $\rho(0) \in [1, \infty)^k$ (\cite{Sch}, Proposition 4.1). By the virtue  of Proposition \ref{Collision}, we deduce that 
\begin{corollary}
The SDE \ref{eq-SDE-lambda} admits a unique strong solution for all $t \geq 0$ and any starting point $\lambda_1(0) \geq \lambda_2(0) \geq \dots \geq \lambda_k(0) \geq 0$.
\end{corollary}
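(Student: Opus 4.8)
The plan is to transfer the strong well-posedness already available for the process $(\rho_j)_{j=1}^k$ back to $(\lambda_j)_{j=1}^k$ through the pointwise change of variables $\phi(\lambda) := (1+\lambda)/(1-\lambda)$ introduced before Proposition \ref{Collision}. The map $\phi$ is a smooth increasing bijection from $[0,1)$ onto $[1,\infty)$, with smooth inverse $\phi^{-1}(\rho) = (\rho-1)/(\rho+1)$, and it extends smoothly across the boundary point $\lambda=0 \leftrightarrow \rho=1$. The computation carried out in the proof of Proposition \ref{Collision} is precisely the statement that, as long as $\lambda$ stays in the open chamber $\{0 < \lambda_k < \dots < \lambda_1 < 1\}$, the vector $\rho=(\phi(\lambda_1),\dots,\phi(\lambda_k))$ solves the $\rho$-SDE driven by the \emph{same} Brownian motion $N$; reading that computation in the reverse direction turns a solution of the $\rho$-SDE into a solution of \eqref{eq-SDE-lambda}.

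First, for existence I would set $\rho_j(0)=\phi(\lambda_j(0)) \in [1,\infty)$ for a given ordered datum $\lambda_1(0) \geq \dots \geq \lambda_k(0) \geq 0$, and invoke \cite[Proposition 4.1]{Sch} to obtain the unique global strong solution $(\rho_j)_{j=1}^k$ of its SDE starting from $\rho(0)$. Since this solution is defined for all $t \geq 0$ and never reaches $+\infty$ in finite time, the process $\lambda_j(t):=\phi^{-1}(\rho_j(t))=(\rho_j(t)-1)/(\rho_j(t)+1)$ stays in $[0,1)^k$ for all times; applying It\^o's formula to $\phi^{-1}$, i.e.\ reversing the computation of Proposition \ref{Collision}, shows that $(\lambda_j)_{j=1}^k$ is a strong solution of \eqref{eq-SDE-lambda} adapted to the filtration generated by $N$. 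This yields strong existence for every admissible starting point.

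For uniqueness, I would start from two strong solutions $\lambda$ and $\tilde{\lambda}$ of \eqref{eq-SDE-lambda} driven by the same $N$ and with the same initial value. A preliminary observation is that both the diffusion coefficient $2\sqrt{\lambda_j}(1-\lambda_j)$ and the entire drift of \eqref{eq-SDE-lambda} carry a factor $(1-\lambda_j)$, so that the coefficients degenerate at $\lambda_j=1$; consequently any solution issued from $[0,1)^k$ remains in $[0,1)^k$ and never hits the boundary $\lambda_j=1$, and $\phi$ may therefore be applied along the whole path. Pushing $\lambda$ and $\tilde{\lambda}$ forward through $\phi$, which is exactly the computation in the proof of Proposition \ref{Collision}, produces two strong solutions $\rho=\phi(\lambda)$ and $\tilde{\rho}=\phi(\tilde{\lambda})$ of the $\rho$-SDE, driven by the same $N$ and starting from the common point $\phi(\lambda(0))$. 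The strong uniqueness in \cite[Proposition 4.1]{Sch} forces $\rho=\tilde{\rho}$, and applying $\phi^{-1}$ gives $\lambda=\tilde{\lambda}$; Proposition \ref{Collision} guarantees in addition that in the interior the singular drift terms $1/(\lambda_j-\lambda_l)$ stay finite. This establishes pathwise uniqueness for all $t \geq 0$.

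The step I expect to be delicate is not the algebraic conjugation by $\phi$, which is routine, but the behaviour at the singular boundaries: the square-root diffusion coefficient $\sqrt{\lambda_j}$ fails to be Lipschitz at the wall $\lambda_j=0$ (equivalently $\rho_j=1$), and the drift is singular at collisions $\lambda_j=\lambda_l$. These are exactly the degeneracies at which naive pathwise-uniqueness arguments break down, and the reason the proof must be routed through the radial Heckman--Opdam process of type $BC_k$: the genuine analytic input, namely strong existence and uniqueness up to and including the walls of the chamber, is imported from \cite{Sch}, while Proposition \ref{Collision} supplies the non-collision that keeps the process in the interior. Once these two facts are in hand, the diffeomorphism $\phi$ does nothing more than relabel the state space, and the corollary follows.
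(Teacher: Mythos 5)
Your proposal takes essentially the same route as the paper: the paper likewise deduces the corollary by combining strong existence and uniqueness for the $\rho$-process---imported from Schapira's Proposition 4.1 on the radial Heckman--Opdam process of type $BC_k$---with the non-collision result of Proposition \ref{Collision}, and transferring back through the rational map $\lambda_j=(\rho_j-1)/(\rho_j+1)$. Your write-up simply makes explicit the existence and uniqueness transfer (and the boundary behaviour at $\lambda_j=1$, i.e.\ non-explosion of $\rho_j$) that the paper leaves as a one-line deduction.
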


\section{Long-time behavior and distribution of the functional $\int_0^t \mathop{\tr} (J_s)ds$} 
In this section, we derive the limit as $t \rightarrow \infty$ of the functional: 
\begin{equation*}
\int_0^t \mathop{\tr} (J_s)ds,
\end{equation*}
and compute its Laplace transform. Indeed, the asymptotics of this functional may be readily derived from \eqref{eq-SDE-coth}. 

\begin{proposition}\label{Lemma1}
As $t\to+\infty$, almost surely we have that
\begin{equation}\label{eq-tr-J}
\lim_{t\to+\infty}\frac1t\int_0^t\tr(J_s)ds=k.
\end{equation}
\end{proposition}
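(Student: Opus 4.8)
The plan is to read off the long-time behavior of $\int_0^t\tr(J_s)\,ds$ from that of the eigenvalues, using the Heckman--Opdam representation \eqref{eq-SDE-coth}. Since $\tr(J_s)=\sum_{j=1}^k\lambda_j(s)$ and each $\lambda_j\in[0,1)$, by the Ces\`aro lemma it suffices to prove that almost surely $\lambda_j(s)\to 1$ for every $1\le j\le k$ as $s\to+\infty$. Through the monotone changes of variable $\rho_j=(1+\lambda_j)/(1-\lambda_j)$ and $\zeta_j=\cosh^{-1}(\rho_j)$ (up to the harmless time scaling $t\mapsto t/4$), this is in turn equivalent to showing that each $\zeta_j(t)\to+\infty$ almost surely.

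First I would exploit the no-collision result, Proposition \ref{Collision}: starting from an ordered non-colliding configuration $\lambda_1(0)>\cdots>\lambda_k(0)>0$, the ordering $\zeta_1(t)>\zeta_2(t)>\cdots>\zeta_k(t)>0$ is preserved for all $t$. I would then concentrate on the smallest particle $\zeta_k$, for which the drift in \eqref{eq-SDE-coth} is a sum of manifestly positive terms: indeed $\coth(x)>0$ for $x>0$, and for every $i\neq k$ one has $\zeta_i>\zeta_k$, so that both $\coth((\zeta_i-\zeta_k)/2)$ and $\coth((\zeta_i+\zeta_k)/2)$ are positive, while the factor $(n-2k)/2\ge 0$ thanks to the standing assumption $k\le n-k$. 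Consequently the drift of $\zeta_k$ is bounded below, uniformly in time, by $\tfrac12\coth(\zeta_k)>\tfrac12$.

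With this uniform lower bound in hand I would integrate the SDE for $\zeta_k$ to obtain $\zeta_k(t)\ge \zeta_k(0)+N_k(t)+t/2$, where $N_k$ is a standard one-dimensional Brownian motion. Since $N_k(t)/t\to 0$ almost surely, it follows that $\zeta_k(t)\to+\infty$, and because $\zeta_k$ is the smallest particle, all $\zeta_j(t)\to+\infty$. Translating back, $\rho_j\to+\infty$ and hence $\lambda_j\to 1$ for every $j$, so that $\tr(J_s)\to k$ almost surely; a final application of the Ces\`aro lemma yields $\frac1t\int_0^t\tr(J_s)\,ds\to k$.

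The main obstacle is the non-uniformity of the drift across particles: for a generic $\zeta_j$ the cross terms $\coth((\zeta_i-\zeta_j)/2)$ are negative whenever $\zeta_i<\zeta_j$, and in fact the bracket $\coth((\zeta_i-\zeta_j)/2)+\coth((\zeta_i+\zeta_j)/2)$ is then negative, so one cannot directly lower-bound the drift of an arbitrary coordinate. The key simplification is that this obstruction disappears precisely for the smallest particle $\zeta_k$, which is why the argument routes through $\zeta_k$ together with the preserved ordering rather than treating each coordinate symmetrically; the inequality $k\le n-k$ is used exactly to keep the $\coth(\zeta_k/2)$ term from spoiling positivity. For a general (possibly colliding or degenerate) starting point one would additionally invoke the strong existence and uniqueness statement recorded after Proposition \ref{Collision}, combined with a routine approximation argument to remove the ordering hypothesis.
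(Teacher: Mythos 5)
Your proof is correct and follows essentially the same route as the paper: both pass to the Heckman--Opdam coordinates $\zeta_j$ via \eqref{eq-SDE-coth}, show that $\zeta_j(t)\to+\infty$ almost surely under the standing assumption $k\le n-k$, and then conclude $\lambda_j\to 1$ and \eqref{eq-tr-J} by Ces\`aro. The only difference is one of packaging: where the paper cites a comparison argument from \cite{Sch} (Proposition 4.2) to obtain the divergence of the $\zeta_j$, you make that comparison explicit and self-contained by using the non-collision result to preserve the ordering, bounding the drift of the smallest particle $\zeta_k$ below by $\tfrac12\coth(\zeta_k)>\tfrac12$, and integrating.
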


\begin{proof}
Recall the SDE \eqref{eq-SDE-coth}:
\[
d\zeta_j = dN_j + \left(\coth(\zeta_j) + \frac{(n-2k)}{2} \coth\left(\frac{\zeta_j}{2}\right)\right)dt + \frac{1}{2}\sum_{i\not=j}\left[\coth\left(\frac{\zeta_i -\zeta_j}{2}\right) + \coth\left(\frac{\zeta_i + \zeta_j}{2}\right)\right]dt. 
\]
Under the standing assumption of $n-k\ge k$ and using a comparison argument (see e.g. Proposition 4.2 in \cite{Sch}), we infer that $\zeta_j\to +\infty$ almost surely as $t\to+\infty$. Therefore 
\[
\rho_j(t)=\cosh(\zeta_j(4t))\to +\infty,\quad j=1,\dots, k,
\]
and in turn it holds almost surely that 
\[
\lambda_j(t)=\frac{\rho_j-1}{\rho_j+1}\to 1,\quad j=1,\dots, k.
\]
The conclusion follows.
\end{proof}

Now, we shall get more insight into the distribution of the above functional and give an expression of its Laplace transform relying on Girsanov Theorem and Karlin-McGregor formula. Moreover, as in \cite{Demni}, we shall further point out an interesting connection with the generalized Maass Laplacian in the complex hyperbolic space (\cite{Aya-Int}). 

Let $(w_t)_{t \ge 0}=(X_t Z_t^{-1})_{t \ge 0}$ be a Brownian motion on $HG_{n,k}$ as in Theorem \ref{main:s1} and recall $J=w^*w$ as well its the eigenvalues process $(\lambda_j)_{j=1}^k$ of $J$. Recall also
\begin{equation*}
\rho_j:=\frac{1+\lambda_j}{1-\lambda_j}.
\end{equation*}
We assume that the $\lambda_j(0)$'s and therefore the $\rho_j(0)$'s are pairwise distinct. This is not a loss of generality and our results extend by continuity to non necessarily pairwise distinct eigenvalues.

\subsection{An auxiliary lemma}
Let 
$$\Delta_k=\{ \rho \in \mathbb{R}^k, \rho_1 > \rho_2 > \dots > \rho_k>1\}$$ and for any $\alpha \geq 0$, introduce the following diffusion operator: 
\begin{equation}\label{Gen1}
\mathscr{L}_u^{(k,n,\alpha)} := 2(u^2-1)\partial_{u}^2 + 2\left[(n+2-2k+2\alpha)u + n-2k -2\alpha \right]\partial_u, \quad u \geq 1. 
\end{equation}
 Performing the variable change $u = \cosh(2r), r \geq 0$, $\mathscr{L}_u^{(k,n,\alpha)}$ is mapped into the following hyperbolic Jacobi operator (\cite{Koor}): 
 \begin{equation*}
\mathscr{H}_r^{(n,k, \alpha)} =  \frac{1}{2}\left\{\partial_r^2 + \left[(2(n-2k)+1)\coth\left(r\right) + (4\alpha + 1)\tanh\left(r\right)\right]\partial_r\right\}. 
 \end{equation*}
In particular, $\mathscr{L}_u^{(k,n,0)} = \mathscr{L}_u^{(k,n)}$ is mapped into the radial part of the Laplace-Beltrami operator on the complex hyperbolic space of $H_{n-2k+1} \sim SU(n-2k+1,1)/SU(n-2k+1)$. 

Let $q_t^{(n,k,\alpha)}$ denote the heat kernel (with respect to Lebesgue measure) of $\mathscr{L}_u^{(k,n,\alpha)}$ with Neumann boundary condition at $u=1$. This kernel does not admits in general a simple expression as can be seen from the Jacobi-Fourier inversion formula (Theorem 2.3. in \cite{Koor}). Nonetheless, this formula simplifies when $\alpha = 0$ (see also \cite{Ner}, \cite{Sch}) and yields: 
\begin{multline*}
q_t^{(n,k,0)}(u_1, u_2) = \frac{(u_2-1)^{n-2k}}{\pi 2^{n-2k},}\int_0^{\infty}e^{-2t(\mu^2+\kappa^2)}
F_{-\mu}(-v_1)F_{\mu}(-v_2) \left|\frac{\Gamma(\kappa+i\mu)\Gamma(\kappa-2\alpha + i\mu)}{\Gamma(2i\mu)}\right|^2 d\mu  
\end{multline*}
 where $v_i = (u_i-1)/2, i \in \{1,2\}$, $\kappa:= (n-2k+1)/2$ and 
\begin{equation*}
F_{\mu}(-v_i) = {}_2F_1\left(\kappa+ i\mu, \kappa-i\mu, n-2k+1; \frac{1-u_i}{2}\right), \quad i \in \{1,2\}, \quad \mu \in \mathbb{R},
\end{equation*}
are Jacobi functions. In this case, we have: 
\begin{lemma}\label{Estimate}
For any $u_1 \geq 1, t > 0, \alpha \geq 0$,  
 \begin{equation*}
\int_1^{\infty} (u_2)^{\alpha} q_t^{(n,k,0)}(u_1, u_2) du_2 < \infty. 
\end{equation*}
Consequently, 
\begin{equation*}
\mathbb{E}\left[ \sup_{0 \le s \le t} \det(I_k-J(s))^{-\alpha}\right] < \infty.
\end{equation*}
\end{lemma}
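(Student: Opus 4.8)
The plan is to read both assertions probabilistically, as moment bounds for the one-dimensional diffusions already identified in Section \ref{sec-J process}, and to control them by feeding suitable power functions into the corresponding generators. For the first assertion, observe that $q_t^{(n,k,0)}(u_1,\cdot)$ is by definition the transition density of the diffusion $(U_s)$ with generator $\mathscr{L}_u^{(k,n,0)}=\mathscr{L}^{(k,n)}_u$ started at $u_1$, so that $\int_1^\infty u_2^\alpha q_t^{(n,k,0)}(u_1,u_2)du_2=\mathbb{E}_{u_1}[U_t^\alpha]$. I would compute $\mathscr{L}^{(k,n)}_u u^\alpha$ directly: the leading term is $2\alpha(\alpha+n+1-2k)u^\alpha$, and since $u\ge1$ the remaining contributions in $u^{\alpha-1},u^{\alpha-2}$ are dominated by $u^\alpha$, whence $\mathscr{L}^{(k,n)}_u u^\alpha\le C_\alpha u^\alpha$. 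A preliminary remark is that $u=1$ is an entrance (inaccessible) boundary: under $u=\cosh(2r)$ the radial coordinate behaves near $r=0$ like a Bessel process of dimension $2(n-2k+1)\ge2$, so Itô's formula carries no boundary local-time term. Localizing at $\tau_N=\inf\{s:U_s\ge N\}$, taking expectations to kill the martingale and invoking Gronwall gives $\mathbb{E}_{u_1}[U_{t\wedge\tau_N}^\alpha]\le u_1^\alpha e^{C_\alpha t}$ uniformly in $N$; this same bound forces $P(\tau_N\le t)\le u_1^\alpha e^{C_\alpha t}/N^\alpha\to0$, hence non-explosion, and Fatou yields the claim.

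For the second (``consequently'') assertion I would first reduce it to a moment bound for the eigenvalue process. Recall from the proof of Theorem \ref{main:s1} that $I_k-w^*w=(Z^{-1})^*Z^{-1}$, so $\det(I_k-J)=|\det Z|^{-2}$ and, in terms of the eigenvalues, $\det(I_k-J(s))^{-\alpha}=2^{-k\alpha}\prod_{j=1}^k(\rho_j(s)+1)^\alpha=:2^{-k\alpha}\Phi(\rho(s))$, where $\rho_j=(1+\lambda_j)/(1-\lambda_j)$ and $(\rho_j)$ is the interacting diffusion with generator $L^{(k,n)}_{\rho_1,\dots,\rho_k}$. The key computation is then $L^{(k,n)}\Phi\le C\Phi$. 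The single-particle part is controlled exactly as in the first assertion, since for $\rho_j\ge1$ the ratios $(\rho_j-1)/(\rho_j+1)$ and $[(n+2-2k)\rho_j+(n-2k)]/(\rho_j+1)$ are bounded. The apparently dangerous singular interaction $4\sum_{l\ne j}\frac{\rho_j^2-1}{\rho_j-\rho_l}\partial_j\Phi$ is handled by \emph{symmetrization}: since $\partial_j\Phi=\alpha(\rho_j+1)^{-1}\Phi$, pairing the terms $(j,l)$ and $(l,j)$ produces $4\alpha\frac{(\rho_j-1)-(\rho_l-1)}{\rho_j-\rho_l}\Phi=4\alpha\Phi$ per unordered pair, so the singularities cancel and the interaction contributes only the bounded amount $2\alpha k(k-1)\Phi$. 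Altogether $L^{(k,n)}\Phi\le C\Phi$ for a constant $C=C(\alpha,n,k)$.

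With this drift bound in hand I would conclude as follows. Taking $\tau_N=\inf\{s:\rho_1(s)\ge N\}$, the Itô decomposition $\Phi(\rho_{t\wedge\tau_N})=\Phi(\rho_0)+M_{t\wedge\tau_N}+\int_0^{t\wedge\tau_N}L^{(k,n)}\Phi\,ds$ together with Gronwall gives the uniform fixed-time bound $\mathbb{E}[\Phi(\rho_{t\wedge\tau_N})]\le\Phi(\rho_0)e^{Ct}$ (and, applied with $2\alpha$ in place of $\alpha$, the same for $\Phi^2$), which as before also yields non-explosion. To upgrade this to the supremum I would write $\sup_{s\le t}\Phi(\rho_{s\wedge\tau_N})\le\Phi(\rho_0)+\sup_{s\le t}|M_{s\wedge\tau_N}|+C\int_0^t\Phi(\rho_{s\wedge\tau_N})ds$ and estimate the martingale maximum by Burkholder--Davis--Gundy, using $\langle M\rangle_{t\wedge\tau_N}\le 4\alpha^2k\int_0^{t\wedge\tau_N}\Phi^2(\rho_s)ds$, whose expectation is finite by the $2\alpha$-bound. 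This makes $\mathbb{E}[\sup_{s\le t}\Phi(\rho_{s\wedge\tau_N})]$ bounded uniformly in $N$, and monotone convergence as $N\to\infty$ finishes the proof since $\det(I_k-J(s))^{-\alpha}=2^{-k\alpha}\Phi(\rho(s))$. The main obstacle is precisely the singular pairwise interaction in $L^{(k,n)}$: the entire argument hinges on the symmetrization cancellation that converts this non-integrable drift into the harmless constant multiple $2\alpha k(k-1)$ of $\Phi$, after which the passage from a fixed-time moment to the supremum (via BDG and the auxiliary $2\alpha$-moment) is routine.
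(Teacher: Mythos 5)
Your proof is correct, but it takes a genuinely different route from the paper's on both assertions. For the first claim, the paper does not use a Lyapunov function: it invokes the comparison principle (via $\tanh(r)<\coth(r)$) to dominate the diffusion attached to $\mathscr{H}_r^{(n,k,0)}$ by the hyperbolic Bessel-type process $H(t)=H(0)+\gamma(t)+(n-2k+1)\int_0^t \coth(H(s))\,ds$, and then cites \cite{Jak-Wis} for the fact that $\cosh(H(t))$ has moments of all orders; your computation $\mathscr{L}^{(k,n)}_u u^\alpha\le C_\alpha u^\alpha$ followed by localization, Gronwall and Fatou is more self-contained, yields the explicit bound $u_1^\alpha e^{C_\alpha t}$, and settles non-explosion along the way (your remark that $u=1$ is inaccessible, via the Bessel dimension $2(n-2k+1)\ge 2$, is the right way to rule out a boundary term). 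For the second claim the divergence is larger: the paper bounds $\det(I_k-J(t))^{-\alpha}\le 2^{-\alpha k}(1+\rho_1(t))^{k\alpha}$ by the top particle, notes that $\rho_1$ is a submartingale (non-negative drift) so that Doob's maximal inequality reduces the supremum to a fixed-time moment, and then evaluates $\mathbb{E}[\rho_1(t)^{k\alpha}]$ through the Karlin--McGregor determinantal expression for the semigroup density of $(\rho_j)_{j=1}^k$ — this is precisely where the first assertion enters, which is what the word ``consequently'' refers to. You never use the first assertion nor Karlin--McGregor: you run the Lyapunov/Gronwall scheme directly on the interacting system with $\Phi(\rho)=\prod_{j=1}^k(1+\rho_j)^\alpha$, where the singular drifts cancel thanks to your (correct) symmetrization identity $\frac{\rho_j-1}{\rho_j-\rho_l}+\frac{\rho_l-1}{\rho_l-\rho_j}=1$, giving the exact contribution $2\alpha k(k-1)\Phi$, and you recover the supremum via BDG and the auxiliary $2\alpha$-moment rather than via Doob (note your pointwise manipulations on the eigenvalue SDE silently use non-collision, which Proposition \ref{Collision} guarantees almost surely). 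Both routes are sound; the paper's is shorter because it recycles the Vandermonde/Karlin--McGregor machinery needed anyway for Theorem \ref{theo-Laplace} and outsources the one-dimensional moment bound to the literature, while yours is elementary, quantitative (explicit $e^{Ct}$ bounds), avoids determinantal formulas entirely, and incidentally establishes the second assertion independently of the first.
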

\begin{proof}

As to the first assertion, we use the comparison principle for stochastic differential equations together with the obvious inequality $\tanh(r) < \coth(r)$ to see that the diffusion associated to $\mathscr{H}_r^{(n,k,0)}$ is dominated by the (unique strong) solution 
$(H(t))_{t \geq 0}$ of the SDE: 
\begin{equation*}
H(t) = H(0) + \gamma(t) + (n-2k+1)\int_0^t \coth(H(s)) ds,
\end{equation*}
where $\gamma$ is a real Brownian motion. The diffusion $(\cosh(H(t)))_{t \geq 0}$ is studied in detail for instance in \cite{Jak-Wis} from which it is seen that it has moments of all orders. Keeping in mind the correspondence between $\mathscr{L}_u^{(k,n,\alpha)}$ and 
$\mathscr{H}_r^{(n,k, \alpha)}$, the first part of the lemma follows.

For the second part of the lemma, without loss of generality we assume $\rho(0) \in \Delta_k$. We then first note that from the non-collision property of $\rho$.
\begin{align*}
\det(I_k-J(t))^{-\alpha} & =2^{-\alpha k} \ \prod_{j=1}^k(1+\rho_j(t))^{\alpha} \\
 & \le 2^{-\alpha k} \prod_{j=1}^k(1+\rho_1(t))^{\alpha}  \\
 & \le 2^{-\alpha k} (1+\rho_1(t))^{k \alpha}
\end{align*}
It is therefore enough to prove that for every $\alpha \ge 0$ and $t \ge 0$ we have
$
\mathbb{E} \left( \sup_{0\le s \le t} \rho_1(s)^\alpha \right) < \infty
$
The SDE satisfied by $\rho_1$ has a non-negative drift and therefore the process $\rho_1(t)$ is a sub-martingale. From Doob's maximal inequality, it is therefore enough to prove that  for every $\alpha \ge 0$ and $t \ge 0$ we have
$
\mathbb{E} \left( \rho_1(t)^\alpha \right) < \infty.
$

From  \eqref{DoobV} and Karlin-McGregor formula the semigroup density of $(\rho_j(t))_{j=1}^k$ may be written as: 
\begin{equation*}
e^{-k(k-1)(3n+2-4k)t/3} \frac{V(\rho)}{V(\rho(0))} \det \left(q_t^{(n,k,0)}(\rho_j(0), \rho_a)\right)_{j,a = 1}^k. 
\end{equation*}
The conclusion follows therefore from the first part of the lemma.
 \end{proof}

\subsection{Laplace transform of $\int_0^t \mathop{\tr} (J_s)ds$}
Now, we are ready to prove the following theorem:
\begin{theorem}\label{theo-Laplace}
Assume $\rho(0) \in \Delta_k$. For every $\alpha >0$ and $t >0$,
\begin{align*}
 & \mathbb{E}\left\{\exp-\left(2\alpha^2 \int_0^t \mathop{\tr}(J(s)) ds \right)\right\} \\
 = & \frac{e^{[6\alpha (n-2k+1)-(k-1)(3n+2-4k)]kt/3}}{V(\rho(0))} \left[ \prod_{j=1}^k(1+\rho_j(0))^{\alpha} \right]
\det\left(\int_{1}^{+\infty} \frac{u^{a-1}}{(1+u)^{\alpha}}q_t^{(n,k,\alpha)}(\rho_j(0), u) du\right)_{a,j=1}^k. 
\end{align*}
\end{theorem}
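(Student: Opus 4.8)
The plan is to transfer the computation to the level of the radial eigenvalue process $(\rho_j)_{j=1}^k$ and to exploit its determinantal (Karlin--McGregor) structure together with a Girsanov change of measure at the single-particle level. Since $\tr(J_s) = \sum_{j=1}^k \lambda_j(s) = \sum_{j=1}^k \lambda(\rho_j(s))$ with $\lambda(u) := (u-1)/(u+1)$, the functional is an additive, permutation-symmetric functional of $(\rho_j)$, and the quantity to be computed is $\mathbb{E}_{\rho(0)}[\exp(-2\alpha^2\int_0^t \sum_j\lambda(\rho_j(s))\,ds)]$, i.e.\ the value at $f\equiv 1$ of the Feynman--Kac semigroup $e^{t(L^{(k,n)}_{\rho_1,\dots,\rho_k} - 2\alpha^2\sum_j\lambda(\rho_j))}$.

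First I would record the single-particle intertwining. Writing $h(u) := (1+u)^\alpha$, a direct computation from \eqref{Gen1} gives the identity
\[
\left(\mathscr{L}_u^{(k,n,0)} - 2\alpha^2\lambda(u)\right)(h f) = h(u)\left(\mathscr{L}_u^{(k,n,\alpha)}f + 2\alpha(n-2k+1)f\right),
\]
valid for all smooth $f$. This is precisely the ground-state transform associated with adding the drift $4\alpha(u-1)\partial_u = \mathscr{L}^{(k,n,\alpha)}_u - \mathscr{L}^{(k,n,0)}_u$, and at the probabilistic level it is the infinitesimal form of a Girsanov change of measure: if $U$ solves the SDE with generator $\mathscr{L}_u^{(k,n,0)}$ driven by $N$, then the exponential $\exp\big(\int_0^t 2\alpha\sqrt{(U_s-1)/(U_s+1)}\,dN_s - 2\alpha^2\int_0^t\lambda(U_s)\,ds\big)$ is the density turning $U$ into the diffusion with generator $\mathscr{L}_u^{(k,n,\alpha)}$, its martingale property being guaranteed by the moment bounds of Lemma \ref{Estimate}. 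Consequently the single-particle Feynman--Kac kernel of $\mathscr{L}_u^{(k,n,0)}-2\alpha^2\lambda$ is
\[
\widetilde q_t(u_0,u) = e^{2\alpha(n-2k+1)t}\,\frac{(1+u_0)^\alpha}{(1+u)^\alpha}\,q_t^{(n,k,\alpha)}(u_0,u).
\]

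Next I would lift this to the $k$-particle process. By the Doob transform \eqref{DoobV}, the process $(\rho_j)$ is the $h$-transform with $h=V$ of $k$ independent copies of the diffusion with generator $\mathscr{L}^{(k,n,0)}$, killed at the first collision; non-collision (Proposition \ref{Collision}) makes this description global in time. Since conjugation by $V$ commutes with multiplication by the symmetric potential $2\alpha^2\sum_j\lambda(\rho_j)$, the Feynman--Kac semigroup factors through the independent system, and the Karlin--McGregor formula (the reflection principle being compatible with a permutation-symmetric potential) represents the killed kernel as $\det(\widetilde q_t(\rho_j(0),\rho_a))_{j,a=1}^k$. Taking $f\equiv 1$ and integrating over the terminal positions in the Weyl chamber $\Delta_k$ yields
\[
\mathbb{E}_{\rho(0)}\!\left[e^{-2\alpha^2\int_0^t\tr(J_s)ds}\right] = \frac{e^{-k(k-1)(3n+2-4k)t/3}}{V(\rho(0))}\int_{\Delta_k}\det\!\big(\widetilde q_t(\rho_j(0),\rho_a)\big)_{j,a}\,V(\rho)\,d\rho.
\]

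Finally I would expand the terminal Vandermonde as $V(\rho)=\det(\rho_a^{b-1})$ (up to the usual sign) and apply the Andréief (Heine) identity to turn the integral over $\Delta_k$ of the product of two determinants into a single determinant of scalar integrals,
\[
\int_{\Delta_k}\det\!\big(\widetilde q_t(\rho_j(0),\rho_a)\big)\det\!\big(\rho_a^{b-1}\big)\,d\rho = \det\!\left(\int_1^{+\infty}\widetilde q_t(\rho_j(0),u)\,u^{b-1}\,du\right)_{j,b}.
\]
Substituting the expression for $\widetilde q_t$, the factor $e^{2\alpha(n-2k+1)t}$ comes out of each of the $k$ columns and $(1+\rho_j(0))^\alpha$ out of each row, producing the prefactor $e^{[6\alpha(n-2k+1)-(k-1)(3n+2-4k)]kt/3}\prod_{j}(1+\rho_j(0))^\alpha/V(\rho(0))$ and leaving exactly $\det(\int_1^{+\infty} u^{b-1}(1+u)^{-\alpha}q_t^{(n,k,\alpha)}(\rho_j(0),u)\,du)$, which after transposing indices is the claimed formula. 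I expect the main obstacle to be the third paragraph's assertion that the determinantal Karlin--McGregor structure survives the insertion of the Feynman--Kac weight; this rests on the symmetry of the potential and on the integrability furnished by Lemma \ref{Estimate}, which also legitimizes interchanging the expectation, the Girsanov density and the spatial integrations.
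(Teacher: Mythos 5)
Your proposal is correct, and the final bookkeeping (the intertwining identity, the kernel relation $\widetilde q_t(u_0,u) = e^{2\alpha(n-2k+1)t}(1+u_0)^{\alpha}(1+u)^{-\alpha}q_t^{(n,k,\alpha)}(u_0,u)$, and the exponent $-k(k-1)(3n+2-4k)t/3 + 2\alpha(n-2k+1)kt$) all match the stated formula. But you organize the argument differently from the paper. The paper performs the Girsanov transform directly on the \emph{interacting} eigenvalue process: the key computation there is the exact stochastic-calculus identity $-d\log\det(I_k-J) = 2\sum_j \sqrt{\lambda_j}\,dN_j + 2k(n-k)\,dt$, which shows that the exponential martingale $M_t^{(\alpha)}$ absorbs the Feynman--Kac functional \emph{entirely}, leaving under the tilted measure only an expectation of a function of the terminal position $\big[\det(I_k-J(t))\big]^{-\alpha}$. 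The tilted interacting process is then identified as the Vandermonde--Doob transform of $k$ independent $\mathscr{L}^{(k,n,\alpha)}$-diffusions, so only the \emph{plain} Karlin--McGregor formula is needed, followed by Andr\'eief. You instead keep the original process, use its Doob-transform description by $V$ over independent $\mathscr{L}^{(k,n,0)}$-particles, push the symmetric Feynman--Kac weight through the determinantal structure, and only then change measure particle by particle via the ground-state transform $h(u)=(1+u)^{\alpha}$ (which is precisely the single-particle shadow of the paper's matrix-level density). What your route buys is the avoidance of the matrix It\^o computation of $d\log\det(I_k-J)$; what it costs is the extra lemma you yourself flag, namely that the Karlin--McGregor determinant survives insertion of a permutation-symmetric Feynman--Kac potential. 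That lemma is true and your justification is the right one -- in the reflection argument, swapping the tails of two paths at their first collision time leaves the additive functional $\sum_m \int_0^t v(X^m_s)\,ds$ invariant, so the colliding configurations still cancel in the antisymmetrized sum -- but in a complete write-up this swap argument (together with the Fubini interchanges, which Lemma \ref{Estimate} legitimizes, exactly as it legitimizes the martingale property of $M_t^{(\alpha)}$ in the paper) would need to be spelled out, since the paper never needs this strengthened form of Karlin--McGregor.
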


\begin{proof}
The proof is rather long and adapts to the hyperbolic case ideas developed in \cite{BW-Winding}. The main ingredient is a matrix Girsanov transform.
Recall first the SDE \eqref{eq-SDE-lambda} for $(\lambda_j)_{j=1}^k$. It can be simplified to
\begin{equation*}
d\lambda_j =  2\sqrt{\lambda_j}(1-\lambda_j) dN_j + 2\left[(n-1) - \lambda_j\right] (1-\lambda_j) dt  + 4(1-\lambda_j)^2 \sum_{l\neq j} \frac{\lambda_l}{\lambda_j - \lambda_l} dt, 
\end{equation*}
for $1 \leq j \leq k$. Then 
\begin{equation*}
-d\log(1-\lambda_j) =  2\sqrt{\lambda_j} dN_j + 2(n-1) dt  + 4(1-\lambda_j) \sum_{l\neq j} \frac{\lambda_l}{\lambda_j - \lambda_l} dt, 
\end{equation*}
and in turn 
\begin{align*}
- d \log\det(I_k-J) & = 2\sum_{j=1}^k\sqrt{\lambda_j} dN_j  + 2(n-1)kdt + 4\sum_{j=1}^k \sum_{l\neq j} (1-\lambda_j)  \frac{\lambda_l}{\lambda_j - \lambda_l} dt 
\\& = 2\sum_{j=1}^k\sqrt{\lambda_j} dN_j  + 2(n-1)kdt + 4 \sum_{j=1}^k \sum_{l\neq j}   \frac{\lambda_l}{\lambda_j - \lambda_l} dt 
\\& = 2\sum_{j=1}^k\sqrt{\lambda_j} dN_j  + 2k(n-k) dt. 
\end{align*}
Consequently, for any $\alpha > 0$, the exponential local martingale 
\begin{equation*}
M_t^{(\alpha)}:= \exp\left(2\alpha\int_0^t \sum_{j=1}^k\sqrt{\lambda_j}(s) dN_j(s) - 2\alpha^2 \int_0^t \mathop{\tr}(J(s)) ds \right), \ t\ge0
\end{equation*}
may be written as 
\begin{equation*}
M_t^{(\alpha)} = e^{-2\alpha k(n-k)t} \left[\frac{\det(I_k-J(0))}{\det(I_k-J(t))}\right]^{\alpha} \exp-\left(2\alpha^2 \int_0^t \mathop{\tr}(J(s)) ds \right), \ t\ge0.
\end{equation*}
$(M_t^{(\alpha)})_{t \geq 0}$ is also a martingale since 
\begin{equation*}
M_t^{(\alpha)} \leq \left[\det(I_k-J(t))\right]^{-\alpha}
\end{equation*}
and by the virtue of Lemma \ref{Estimate}. 
We can therefore define for any fixed time $t > 0$,  a new probability measure 
\begin{equation*}
P^{(\alpha)}_{|\mathscr{F}_t} = M_t^{(\alpha)} P_{|\mathscr{F}_t}
\end{equation*}
and denote $\mathbb{E}^{(\alpha)}$ the corresponding expectation. Then under $P^{(\alpha)}$
\begin{equation*}
\tilde{N}_j(t) := N_j(t) - 2\alpha\int_0^t\sqrt{\lambda_j(s)}ds, \quad 1 \leq j \leq k,
\end{equation*}
defines a $k$-dimensional Brownian motion so that 
\begin{equation*}
d\lambda_j =  2\sqrt{\lambda_j}(1-\lambda_j) d\tilde{N}_j + 2\left[(n-1) - (1-2\alpha)\lambda_j\right] (1-\lambda_j) dt  + 4(1-\lambda_j)^2 \sum_{l\neq j} \frac{\lambda_l}{\lambda_j - \lambda_l} dt. 
\end{equation*}
Recalling 
\[
\rho_j=\frac{1+\lambda_j}{1-\lambda_j},
\]
then It\^o's formula yields: 
\begin{align*}
d\rho_j &= 
 2\sqrt{\rho_j^2-1}d\tilde{N}_j + 2\left[(n+2-2k+2\alpha)\rho_j + n-2\alpha-2k \right] dt  +4(\rho_j^2-1)\sum_{l\neq j} \frac{1}{(\rho_j-\rho_l)} dt.
\end{align*}
Up to the multiplication operator by the constant 
\begin{equation*}
\frac{k(k-1)(3n+2-4k+6\alpha)}{3}, 
\end{equation*}
this process is again a Vandermonde transform of $k$ independent copies of the diffusion whose generator is given by: 
\begin{equation}\label{Gen2}
2(u^2-1)\partial_{u}^2 + 2\left[(n+2-2k+2\alpha)u + n-2k -2\alpha \right]\partial_u, \quad u \geq 1,
\end{equation}
with Neumann boundary condition at $u=1$. As a result, Karlin-McGregor's formula entails (see e.g. \cite{AOW} and references therein):  
\begin{align*}
\mathbb{E}\left\{\exp-\left(2\alpha^2 \int_0^t \mathop{\tr}(J(s)) ds \right)\right\} & = e^{2\alpha k(n-k)t} \mathbb{E}^{(\alpha)}\left[\frac{\det(I_k-J(t))}{\det(I_k-J(0))}\right]^{\alpha} 
\\& = e^{2\alpha k(n-k)t}\prod_{j=1}^k(1+\rho_j(0))^{\alpha}  \mathbb{E}^{(\alpha)}\left[\prod_{j=1}^k\frac{1}{(1+\rho_j(t))^{\alpha}}\right]
\\& = e^{[6\alpha (n-2k+1)-(k-1)(3n+2-4k)]kt/3}\prod_{j=1}^k(1+\rho_j(0))^{\alpha} 
\\& \int_{\Delta_k} \prod_{j=1}^k\frac{d\rho_j}{(1+\rho_j)^{\alpha}} \frac{V(\rho)}{V(\rho(0))} \det(q_t^{(n,k,\alpha)}(\rho_a(0), \rho_j))_{a,j=1}^k
\end{align*}
where $q_t^{(\alpha,k)}(u, v)$ is the heat semi-group of the infinitesimal generator \eqref{Gen1}. Equivalently, the Andr\'eief identity entails (\cite{Dei-Gio}, p. 37) 
\begin{multline*}
\mathbb{E}\left\{\exp-\left(2\alpha^2 \int_0^t \mathop{\tr}(J(s)) ds \right)\right\} = \frac{e^{[6\alpha (n-2k+1)-(k-1)(3n+2-4k)]kt/3}}{V(\rho(0))}\prod_{j=1}^k(1+\rho_j(0))^{\alpha} \\ 
\det\left(\int \frac{u^{a-1}}{(1+u)^{\alpha}}q_t^{(\alpha,k)}(\rho_j(0), u) du\right)_{a,j=1}^k. 
\end{multline*}
\end{proof}

\subsection{Connection to the Maass Laplacian in the complex hyperbolic space}
In this paragraph, we present the connection of $q_t^{(n,k,\alpha)}$ to heat semi-group of the Maass Laplacian on the complex hyperbolic space $\mathbb{C} H^{n-2k+1}$ realized in the unit ball (\cite{Aya-Int}). For $k=1$, such connection was already pointed out in \cite{Demni} and was the key ingredient to derive the density of the corresponding stochastic area. For higher ranks, the computations become tedious. Nonetheless, as in \cite{Demni}, the new expression we obtain below makes transparent the limiting behavior proved in \ref{Lemma1} through the exponential factor $e^{-2\alpha^2 t}$ and is somehow more explicit than the one displayed in Theorem \ref{theo-Laplace} since the heat semi-group of the Maass Laplacian admits a more compact form than $q_t^{(n,k,\alpha)}$ (\cite{Aya-Int}, Theorem 2.2).

Firstly, we perform the variable change $u = \cosh(2r)$ and use the identity $2\cosh^2(r) = 1+\cosh(2r)$ to get: 
\begin{multline}\label{eq-int-trace}
\mathbb{E}\left\{\exp-\left(2\alpha^2 \int_0^t \mathop{\tr}(J(s)) ds \right)\right\} = \frac{e^{[6\alpha (n-2k+1)-(k-1)(3n+2-4k)]kt/3}}{V(\rho(0))} \prod_{j=1}^k(1+\rho_j(0))^{\alpha} \\ 
 \det\left(\frac{[\cosh(2r)]^{a-1}}{2^{\alpha}[\cosh(r)]^{2\alpha}}q_t^{(n,k,\alpha)}(\rho_j(0), \cosh(2r)) d(\cosh(2r))\right)_{a,j=1}^k. 
\end{multline}
On the other hand, from \cite{Demni} (see the proof of Theorem 1), we infer that the hyperbolic Jacobi operator $\mathscr{H}_r^{(n,k)}$ is intertwined via the map: 
\begin{equation*}
f \mapsto \frac{1}{\cosh^{2\alpha}(r)}f
\end{equation*}
with the radial part of the shifted Maass Laplacian $\mathcal{L}$ in the complex hyperbolic space $\mathbb{C} H^{n-2k+1}$. More precisely, it holds that
\[
\mathscr{H}_r^{(n,k)}\left(r \mapsto \frac{1}{\cosh^{2\alpha}r}f(r) \right) + \frac{\left(2\alpha+{n-2k+1}\right)^2}{2\cosh^{2\alpha}r}f(r) =\frac{1}{\cosh^{2\alpha}(r)}\mathcal{L}(f)(r)
\]
where
\begin{equation}\label{Radial}
\mathcal{L}=\frac{1}{2}\partial_{r}^2 + \left[\left(n-2k+\frac{1}{2}\right)\coth(r) + \frac{1}{2}\tanh(r) \right]\partial_{r} + \frac{2\alpha^2}{\cosh^2(r)} + \frac{(n-2k+1)^2}{2}.
 \end{equation}
Consequently, the following identity holds: 
\begin{multline}\label{eq-kernel-identity}
\frac{e^{2\alpha (n-2k+1)t}}{[\cosh(r)]^{2\alpha}} q_t^{(\alpha,k)}(\rho_j(0), \cosh(2r)) d(\cosh(2r)) = \frac{2^{\alpha} e^{-2\alpha^2t}}{(1+\rho_j(0))^{\alpha}} \\ 
v_t^{(n-2k+1, \alpha)}\left(\frac{1}{2}\cosh^{-1}(\rho_j(0)),r\right)dr,
\end{multline}
where $v_t^{(n-2k+1, \alpha)}$ is the heat kernel of $\mathcal{L} - (n-2k+1)^2/2$ in \eqref{Radial} with respect to the radial volume element (the numerical factor was missed in \cite{Demni} and is simply the volume of the Euclidean sphere in $\mathbb{C}^{n-2k+1}$): 
\begin{equation*}
\frac{2\pi^{n-2k+1}}{\Gamma(n-2k+1)}(\sinh(r))^{2(n-2k+1)-1}\cosh(r).
\end{equation*}
By plugging \eqref{eq-kernel-identity} into \eqref{eq-int-trace}, we arrive at: 
\begin{proposition}
The Laplace transform derived in Theorem \ref{theo-Laplace} can be rewritten as: 
\begin{multline}\label{NewLaplace}
\mathbb{E}\left\{\exp-\left(2\alpha^2 \int_0^t \mathop{\tr}(J(s)) ds \right)\right\} = \frac{e^{-2\alpha^2kt}}{V(\rho(0))} e^{-(k-1)(3n+2-4k)kt/3}\frac{2\pi^{n-2k+1}}{\Gamma(n-2k+1)} \\
\det\left(\int [\cosh(2\zeta)]^{a-1}v_t^{(n-2k+1, \alpha)}\left(\frac{1}{2}\cosh^{-1}(\rho_j(0)),r\right) (\sinh(r))^{2(n-2k+1)-1}\cosh(r) dr\right)_{a,j=1}^k. 
\end{multline}
\end{proposition}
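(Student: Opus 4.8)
The plan is to read off \eqref{NewLaplace} directly from the expression \eqref{eq-int-trace} by inserting the heat-kernel identity \eqref{eq-kernel-identity}; the genuinely analytic input, namely the intertwining of the hyperbolic Jacobi operator $\mathscr{H}_r^{(n,k)}$ with the radial Maass Laplacian $\mathcal{L}$ of \eqref{Radial}, has already been established upstream and will be used only through \eqref{eq-kernel-identity}. First I would recall that, after the substitution $u=\cosh(2r)$ together with $1+\cosh(2r)=2\cosh^2(r)$ (so that $(1+u)^{\alpha}=2^{\alpha}\cosh^{2\alpha}(r)$ and $du=d(\cosh(2r))$, with $u\ge 1$ corresponding to $r\ge 0$), the Laplace transform of Theorem \ref{theo-Laplace} is precisely \eqref{eq-int-trace}, whose $(a,j)$ determinant entry is the integral
\[
\int \frac{[\cosh(2r)]^{a-1}}{2^{\alpha}[\cosh(r)]^{2\alpha}}\,q_t^{(n,k,\alpha)}\!\left(\rho_j(0),\cosh(2r)\right)d(\cosh(2r)).
\]

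Next I would substitute \eqref{eq-kernel-identity} into each such entry. Solving that identity for $[\cosh(r)]^{-2\alpha}q_t^{(n,k,\alpha)}(\rho_j(0),\cosh(2r))\,d(\cosh(2r))$ replaces it by
\[
\frac{2^{\alpha}e^{-2\alpha^2 t}\,e^{-2\alpha(n-2k+1)t}}{(1+\rho_j(0))^{\alpha}}\,v_t^{(n-2k+1,\alpha)}\!\left(\tfrac12\cosh^{-1}(\rho_j(0)),r\right)(\sinh(r))^{2(n-2k+1)-1}\cosh(r)\,dr,
\]
up to the normalization $\tfrac{2\pi^{n-2k+1}}{\Gamma(n-2k+1)}$ of the radial volume element with respect to which $v_t^{(n-2k+1,\alpha)}$ is the heat kernel of $\mathcal{L}-(n-2k+1)^2/2$. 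The decisive structural feature is that the scalar prefactor $2^{\alpha}e^{-2\alpha^2 t}e^{-2\alpha(n-2k+1)t}(1+\rho_j(0))^{-\alpha}$ depends only on the column index $j$; by multilinearity of the determinant it therefore factors out of the $k$ columns as a product over $j$. The $2^{\alpha}$ cancels the $2^{-\alpha}$ already sitting in the entry, while the powers $(1+\rho_j(0))^{-\alpha}$ cancel against the prefactor $\prod_{j}(1+\rho_j(0))^{\alpha}$ of \eqref{eq-int-trace}, leaving inside the determinant exactly the integral $\int[\cosh(2r)]^{a-1}v_t^{(n-2k+1,\alpha)}(\tfrac12\cosh^{-1}(\rho_j(0)),r)(\sinh(r))^{2(n-2k+1)-1}\cosh(r)\,dr$ and the overall volume-element constant $\tfrac{2\pi^{n-2k+1}}{\Gamma(n-2k+1)}$ in front.

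Finally I would carry out the exponential bookkeeping, which is the only place demanding care. The $k$ extracted columns contribute $\prod_{j=1}^{k}e^{-2\alpha^2 t}e^{-2\alpha(n-2k+1)t}=e^{-2\alpha^2 kt}e^{-2\alpha(n-2k+1)kt}$, which must be combined with the prefactor of \eqref{eq-int-trace}, written as $e^{[6\alpha(n-2k+1)-(k-1)(3n+2-4k)]kt/3}=e^{2\alpha(n-2k+1)kt}e^{-(k-1)(3n+2-4k)kt/3}$. The competing factors $e^{\pm 2\alpha(n-2k+1)kt}$ cancel, leaving the clean prefactor $e^{-2\alpha^2 kt}e^{-(k-1)(3n+2-4k)kt/3}$, and reassembling this together with $V(\rho(0))^{-1}$, the constant $\tfrac{2\pi^{n-2k+1}}{\Gamma(n-2k+1)}$, and the Maass-kernel determinant gives \eqref{NewLaplace}. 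The main obstacle is thus not the substitution but keeping the exponential shift $e^{-2\alpha^2 t}$ arising from the potential $2\alpha^2/\cosh^2(r)$ and the spectral shift $(n-2k+1)^2/2$ in \eqref{Radial} correctly aligned, so that the cancellation isolates precisely the factor $e^{-2\alpha^2 kt}$ that renders the limiting behavior of Proposition \ref{Lemma1} transparent.
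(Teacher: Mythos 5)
Your proposal is correct and follows essentially the same route as the paper: the paper's own proof consists precisely of plugging the kernel identity \eqref{eq-kernel-identity} into \eqref{eq-int-trace}, factoring the column-dependent scalars $2^{\alpha}e^{-2\alpha^2 t}e^{-2\alpha(n-2k+1)t}(1+\rho_j(0))^{-\alpha}$ out of the determinant by multilinearity, and cancelling $e^{\pm 2\alpha(n-2k+1)kt}$ against the prefactor of \eqref{eq-int-trace}, which is exactly the bookkeeping you spell out. Note only that you inherit the paper's own convention regarding the normalization $\frac{2\pi^{n-2k+1}}{\Gamma(n-2k+1)}$ of the radial volume element (extracted from the $k\times k$ determinant to the first power rather than the $k$-th), so your final expression matches \eqref{NewLaplace} exactly as stated.
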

\begin{remark} 
Note the existing shift between the complex dimension $n \geq 1$ in \cite{Demni} and the rank-one case $k=1$ here corresponding the complex dimension $n-1, n \geq 2$.
\end{remark}

\begin{remark}
The heat kernel $v_t^{(n-2k+1, \alpha)}(0,\cosh\zeta)$ is given by (\cite{Aya-Int}, Theorem 2.2.): 
\begin{multline*}
 v_t^{(n-2k+1, \alpha)}\left(0, r\right) = \frac{4\pi^{n-2k+1}}{\Gamma(n-2k+1)} \int_{r}^{\infty} \frac{dx \sinh(x)}{\sqrt{\cosh^2(x) - \cosh^2(r)}} s_{t,2(n-2k+1)+1}(\cosh(x))
 \\ {}_2F_1\left(-2\alpha, 2\alpha, \frac{1}{2}; \frac{\cosh(r) - \cosh(x)}{2\cosh(r)}\right)dx 
\end{multline*}
where ${}_2F_1$ is the Gauss hypergeometric function and 
\begin{align*}
s_{t,2(n-2k+1)+1}(\cosh(x)) = \frac{e^{-(n-2k+1)^2t/2}}{(2\pi)^{n-2k+1}\sqrt{2\pi t}}\left(-\frac{1}{\sinh(x)}\frac{d}{dx}\right)^{n-2k+1}e^{-x^2/(2t)}, \quad x > 0,
\end{align*}
is the heat kernel with respect to the volume measure of the $2(n-2k+1)+1$-dimensional real hyperbolic space $H^{2n+1}$. 

More generally, if $r_0 \geq 0$ then the heat kernel $v_t^{(n-2k+1, \alpha)}(r_0,r)$ comes with the following additional term resulting from the integration over the sphere $S^{2n-1}$ of an automorphy factor (see \cite{Aya-Int}, Theorem 2.2):
\begin{equation*}
e^{-2i\alpha \arg(1-\langle z,w\rangle)}, \quad |z| = \tanh(r_0),\quad |w| = \tanh(r), \quad w,z \in \mathbb{C} H^{n-2k+1},
\end{equation*}
with respect to the angular part of $w$. 
\end{remark}
\begin{remark}
In \eqref{NewLaplace} appears the factor $e^{-2\alpha^2t}$ which governs the limiting behavior of 
\begin{equation*}
\int_0^t \mathop{\tr} (J_s)ds
\end{equation*}
after rescaling $\alpha \rightarrow \alpha/\sqrt{2t}$. It is then tempting and interesting to deduce Proposition \ref{Lemma1} from  \eqref{NewLaplace}. For $k=1$, this guess was announced in \cite{Demni} without proof and we shall revisit this case in the appendix. In particular, we obtain the whole expansion of the Laplace transform in the time $t$-variable for small values of $\alpha$. Such expansion seems out of reach for the moment since the computations are already tedious and tricky in the rank-one case. 
\end{remark}

\section{Skew-product decomposition, generalized stochastic areas and asymptotic windings}

\subsection{Skew-product decomposition}

Recall from Section \ref{sec:notations} the hyperbolic Stiefel fibration 
\[
\mathbf{U}(k) \to HV_{n,k} \to HG_{n,k},
\]
from which we can view $HV_{n,k}$ as a $\mathbf{U}(k)$-principal bundle over $HG_{n,k}$. Our goal in this section is to decompose the Brownian motion in $HV_{n,k}$ as a skew-product with respect to this fibration.

We first note that a computation similar to the computation done in \cite[Lemma 3.1]{BW-Winding} shows that the connection form of this bundle is given by the $\mathfrak{u}(k)$-valued one-form defined on $HV_{n,k}$ by
\begin{align}\label{eq-contact-form}
\omega:&=\frac{1}{2} \left( (X^* \,  Z^*)\begin{pmatrix} I_{n-k}   & 0 \\ 0 & -I_{k} \end{pmatrix}d\begin{pmatrix}  X \\ Z  \end{pmatrix}-d(X^* \, Z^*)\begin{pmatrix} I_{n-k}   & 0 \\ 0 & -I_{k} \end{pmatrix}\begin{pmatrix}  X \\ Z  \end{pmatrix}\right) \\
&=\frac{1}{2} \left(X^*dX-dX^*X -(Z^*dZ-dZ^*Z)\right).
\end{align}

In the inhomogeneous coordinate $w:=XZ^{-1}$ introduced in Section \ref{sec:notations}, we consider then the following $\mathfrak{u}(k)$ valued one-form defined on $HG_{n,k}$ 
 \begin{align}\label{eq-def-eta}
 \eta:=& \frac12 \left( (I_k-w^* w)^{-1/2} (w^*dw-dw^* \, w)(I_k-w^* w)^{-1/2}  \right. \\
  & \left. - (I_k-w^* w)^{-1/2}\, d(I_k-w^* w)^{1/2}+d(I_k-w^* w)^{1/2} \, (I_k-w^* w)^{-1/2} \right) \notag
 \end{align}

We  are now in position to prove the following skew-product decomposition of the Brownian motion on ${HG}_{n,k}$.
\begin{theorem}\label{skew}
Let $(w_t)_{t \ge 0}$ be a Brownian motion on ${HG}_{n,k}$ started at $w_0=X_0Z_0^{-1} \in {HG}_{n,k}$ and $(\Theta_t)_{t \ge 0}$ the $\mathbf{U}(k)$-valued  solution of the Stratonovich stochastic differential equation
\begin{align*}
\begin{cases}
d\Theta_t = \circ d\mathfrak a_t \, \Theta_t \\
\Theta_0=(Z_0 Z^*_0)^{-1/2}Z_0,
\end{cases}
\end{align*}
where  $\mathfrak{a}_t= \int_{w[0,t]} \eta$. Then the process
\[
\widetilde{w}_t:=\begin{pmatrix} w_t  \\  I_k  \end{pmatrix}(I_k-w_t^*w_t)^{-1/2}\Theta_t
\]
is the horizontal lift at $\begin{pmatrix} X_0  \\ Z_0 \end{pmatrix}$ of $(w_t)_{t \ge 0}$ to ${HV}_{n,k}$. Moreover, if we denote by $(\Omega_t)_{t \ge 0}$ a Brownian motion on the unitary group $\mathbf{U}(k)$ independent from $(w_t)_{t \ge 0}$, then the process $$\begin{pmatrix} w_t  \\  I_k  \end{pmatrix}(I_k-w_t^*w_t)^{-1/2}\Theta_t \,  \Omega_t$$
is a Brownian motion on $HV_{n,k}$ started at $\begin{pmatrix} X_0  \\ Z_0 \end{pmatrix}$.
\end{theorem}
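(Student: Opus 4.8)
The plan is to realize $\widetilde w_t$ as the unique horizontal lift of $(w_t)$ and then add the fiber noise. Introduce the global section $\sigma\colon HG_{n,k}\to HV_{n,k}$,
\[
\sigma(w)=\begin{pmatrix} w \\ I_k\end{pmatrix}(I_k-w^*w)^{-1/2},
\]
and write $P:=(I_k-w^*w)^{-1/2}$, so the two blocks of $\sigma(w)$ are $wP$ and $P$. First I would check that $\sigma$ lands in $HV_{n,k}$, since $(wP)^*(wP)-P^*P=P(w^*w-I_k)P=-I_k$, and that $p\circ\sigma=\mathrm{id}$, so $\sigma$ is genuinely a section of the fibration. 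By construction $\widetilde w_t=\sigma(w_t)\,\Theta_t$ with $\Theta_t\in\mathbf{U}(k)$, hence $\widetilde w_t$ lies in the same fiber as $\sigma(w_t)$ and $p(\widetilde w_t)=w_t$; thus $\widetilde w$ is a lift of $w$. For the initial condition I use the identity $I_k-w^*w=(ZZ^*)^{-1}$ from the proof of Theorem \ref{main:s1}, which gives $(I_k-w_0^*w_0)^{-1/2}=(Z_0Z_0^*)^{1/2}$; together with $\Theta_0=(Z_0Z_0^*)^{-1/2}Z_0$ and $w_0=X_0Z_0^{-1}$ this yields the $Z$-block $(Z_0Z_0^*)^{1/2}(Z_0Z_0^*)^{-1/2}Z_0=Z_0$ and the $X$-block $w_0Z_0=X_0$, i.e. $\widetilde w_0=\begin{pmatrix}X_0\\ Z_0\end{pmatrix}$.

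The crux is horizontality, $\omega(\circ d\widetilde w_t)=0$, and the key lemma is the pullback identity $\sigma^*\omega=\eta$. I would verify it by substituting $X=wP$, $Z=P$ into \eqref{eq-contact-form}: the terms $\tfrac12 P(w^*dw-dw^*w)P$ reproduce the first line of \eqref{eq-def-eta} verbatim, while the remaining terms collapse, after using $w^*w=I_k-P^{-2}$, to $\tfrac12(dP\,P^{-1}-P^{-1}dP)$, which is exactly the second line of \eqref{eq-def-eta} written through $Q=P^{-1}=(I_k-w^*w)^{1/2}$ with $dQ=-P^{-1}dP\,P^{-1}$. Granting $\sigma^*\omega=\eta$, I evaluate $\omega(\circ d\widetilde w)$ for $\widetilde w=\sigma(w)\Theta$ by the Stratonovich chain rule; using $\Theta^*\Theta=I_k$ (so $\circ d\Theta^*\,\Theta=-\Theta^*\circ d\Theta$) the computation organizes into
\[
\omega(\circ d\widetilde w)=\Theta^{-1}\big(\eta(\circ dw)\big)\Theta-\Theta^{-1}\circ d\Theta=\mathrm{Ad}_{\Theta^{-1}}(\circ d\mathfrak a)-\Theta^{-1}\circ d\Theta,
\]
where I used $\eta(\circ dw_t)=\circ d\mathfrak a_t$ by definition of the line integral. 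The defining SDE $\circ d\Theta=\circ d\mathfrak a\,\Theta$ gives $\Theta^{-1}\circ d\Theta=\mathrm{Ad}_{\Theta^{-1}}(\circ d\mathfrak a)$, so the right-hand side vanishes. Hence $\widetilde w$ is a horizontal lift of $w$ starting at $\begin{pmatrix}X_0\\ Z_0\end{pmatrix}$, and by uniqueness of solutions of the horizontality SDE it is \emph{the} horizontal lift. I expect this matrix bookkeeping — especially the cancellation producing $\tfrac12(dP\,P^{-1}-P^{-1}dP)$ and the correct sign in the $\Theta^{-1}\circ d\Theta$ term — to be the main obstacle.

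For the last assertion I would invoke the skew-product structure of the Riemannian submersion $HV_{n,k}\to HG_{n,k}$, whose fibers are totally geodesic and isometric to $\mathbf{U}(k)$. Since totally geodesic fibers are minimal, $\pi$ is a harmonic morphism and the Laplace--Beltrami operator splits as $\Delta_{HV_{n,k}}=\Delta^{\mathcal H}+\Delta^{\mathcal V}$, where $\Delta^{\mathcal H}$ is the horizontal lift of $\Delta_{HG_{n,k}}$ and $\Delta^{\mathcal V}$ restricts on each fiber to the bi-invariant Laplacian of $\mathbf{U}(k)$. The horizontal lift $\widetilde w$ is the diffusion generated by $\tfrac12\Delta^{\mathcal H}$. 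Because the right $\mathbf{U}(k)$-action is isometric and preserves the horizontal distribution (the connection $\omega$ is $\mathbf{U}(k)$-equivariant), right translation by the independent fiber Brownian motion $\Omega$ superposes, by independence, the vertical generator $\tfrac12\Delta^{\mathcal V}$ without altering the horizontal one. Concretely, I would write $\circ d(\widetilde w_t\Omega_t)=(\circ d\widetilde w_t)\Omega_t+\widetilde w_t(\circ d\Omega_t)$, note the vanishing cross quadratic variation, and read off that $\widetilde w_t\Omega_t$ has generator $\tfrac12(\Delta^{\mathcal H}+\Delta^{\mathcal V})=\tfrac12\Delta_{HV_{n,k}}$ and starts at $\begin{pmatrix}X_0\\ Z_0\end{pmatrix}$, exactly as in the companion computation of \cite{BW-Winding}. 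Hence $\widetilde w_t\Omega_t$ is a Brownian motion on $HV_{n,k}$, as claimed.
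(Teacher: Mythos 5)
Your proposal is correct and takes essentially the same approach as the paper: your identity $\omega(\circ d\widetilde w_t)=\mathrm{Ad}_{\Theta_t^{-1}}\left(\eta(\circ dw_t)\right)-\Theta_t^{-1}\circ d\Theta_t=0$ is precisely the paper's computation $\omega=\frac12\left(\circ d\Theta^*\Theta-\Theta^*\circ d\Theta \right)+\Theta^*\circ \eta\,\Theta=\Theta^*\circ\left(-d\mathfrak{a}+\eta\right)\Theta=0$ along $\widetilde w[0,t]$, with your pullback lemma $\sigma^*\omega=\eta$ merely making explicit a substitution the paper leaves implicit. For the second assertion you invoke the same ingredients as the paper (totally geodesic fibers, splitting of the generator, independence of the fiber Brownian motion), which is exactly the argument of Theorem 3.3 in the spherical case that the paper cites.
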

\begin{proof}
Note the fact that on $\widetilde{w}[0,t]$,
\begin{align*}
\omega&=\frac12\left(\circ d\Theta^*\Theta-\Theta^*\circ d\Theta \right)+\Theta^*\circ \eta\Theta\\
&=\Theta^*\circ\left(-d\mathfrak{a}+\eta\right) \Theta=0,
\end{align*}
where $\Theta=(I_k-w^*w)^{1/2}Z$.
The first assertion then follows from the definition of horizontal stochastic lift, namely 
$$\int_{\widetilde{w}[0,t]} \omega=0.$$

By taking into account that the pseudo-Riemannian submersion $HV_{n,k}\to {HG}_{n,k}$ is totally geodesic, the second assertion then follows from an analogues argument as in the proof of Theorem 3.3 in the spherical case (\cite{BW-Winding}).
\end{proof}

\subsection{Limit theorem for the generalized stochastic area process in the hyperbolic complex Grassmannian}

Let $(w_t)_{t \ge 0}$ be a Brownian motion on $HG_{n,k}$ as in Theorem \ref{main:s1}. From \eqref{eq-def-eta} we have that
\begin{align}\label{eq-int-tr-eta}
-\int_{w[0,t]} \mathrm{tr} (\eta) & =\frac12 \mathrm{tr} \left[ \int_0^t   (I_k-J)^{-1/2} (\circ dw^* \, w-w^*\circ dw)(I_k-J)^{-1/2}\right] \notag\\
 & =\frac12 \mathrm{tr} \left[ \int_0^t   (I_k-J)^{-1/2} ( dw^* \, w-w^* dw)(I_k-J)^{-1/2}\right].
\end{align}
Using the complex Hermitian version of Lemma 1 in \cite{BZ} (see Appendix B below), we can show after simple, yet lengthy, computations that (see \cite{Won}, p. 593, for the compact analogue):
\begin{align*}
d \mathrm{tr} (\eta)  & = \mathrm{tr}\left[(I_k -w^*w)^{-1}(dw)^*(I_{n-k} - ww^*)^{-1} dw\right] 
\\& =-\partial \overline{\partial} \ln \det (I_k -w^*w) 
\end{align*}
where 
\begin{equation*}
\partial: = \sum_{1 \leq a \leq n-k, 1 \leq b\leq k} dw_{a,b}\frac{\partial}{\partial{w_{a,b}}}, \quad \overline{\partial}: = \sum_{1 \leq a \leq n-k, 1 \leq b\leq k} d\overline{w}_{a,b}\frac{\partial}{\partial{\overline{w}_{a,b}}}
\end{equation*}
are the Dolbeaut operators and are such that $d = \partial + \overline{\partial}$. Of course, the product of two matrix-valued one-forms is the exterior product of their entries. It follows that $i \mathrm{tr} (d\eta)$ is the K\"ahler form on the complex manifold $HG_{n,k}$. Therefore, in some sense, the functional $\int_{w[0,t]} \mathrm{tr} (\eta)$ can be interpreted as  a generalized  stochastic area process on $HG_{n,k}$ as in \cite{BW-SWHF}, and as a nice consequence of \eqref{eq-int-tr-eta}, we obtain  in the theorem below the limiting theorem for this generalized stochastic area process.

\begin{theorem}\label{LimitTheorem}
When $t\to+\infty$, we have in distribution 
\[
\frac{1}{i\sqrt{t}} \int_{w[0,t]} \mathrm{tr} (\eta) \to \mathcal{N}(0,k),
\]
\end{theorem}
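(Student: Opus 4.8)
The plan is to reduce the statement to the real-valued functional $\int_0^t \tr(J_s)\,ds$ already analyzed in Section 3, via an identity expressing $\int_{w[0,t]} \tr(\eta)$ as a time-changed one-dimensional Brownian motion. First I would examine the local martingale $-\int_{w[0,t]} \tr(\eta)$ appearing in \eqref{eq-int-tr-eta}. Since $\tr(\eta)$ is $\mathfrak{u}(k)$-valued (hence its trace is purely imaginary, as $\eta$ is skew-Hermitian-valued), the quantity $\tr(\eta)$ takes values in $i\R$, which explains the factor $i$ in the statement. I would write $\int_{w[0,t]} \tr(\eta) = i\, m_t$ for a real continuous local martingale $m_t$, and the key computation is to identify its quadratic variation $\langle m\rangle_t$.

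The central step is to show that
\[
\left\langle \int_{w[\cdot]} \tr(\eta)\right\rangle_t = -\,C \int_0^t \tr(J_s)\,ds
\]
for an explicit constant $C$, so that after accounting for the $i$ one gets $\langle m\rangle_t = C\int_0^t \tr(J_s)\,ds$. To carry this out I would start from the Stratonovich expression in \eqref{eq-int-tr-eta}, convert it to an It\^o integral (the Stratonovich-to-It\^o correction only affects the finite-variation part and not the quadratic variation), and then compute the bracket using the covariation structure $dw_{ij}\,d\overline{w}_{i'j'}$ given in \eqref{eq-dwdw}. Concretely, the integrand is $\tfrac12\tr[(I_k-J)^{-1/2}(dw^*\,w - w^*\,dw)(I_k-J)^{-1/2}]$, and squaring it and taking expectations of the It\^o products should collapse, after using $(I_{n-k}-ww^*)$ and $(I_k-w^*w)$ from \eqref{eq-dwdw} together with the polar decomposition $w=Q\sqrt{J}$, to a clean multiple of $\tr(J)$. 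This matches the assertion in the surrounding text that $\int_{w[0,t]}\tr(\eta) = i\,\mathcal B_{\int_0^t \tr(J)\,ds}$ for a Brownian motion $\mathcal B$ independent of $J$; establishing this representation is exactly the content of the bracket computation plus the Dambis--Dubins--Schwarz (time-change) theorem.

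Once the time-change representation $\int_{w[0,t]}\tr(\eta) = i\,\mathcal B_{A_t}$ with $A_t = C\int_0^t \tr(J_s)\,ds$ is in hand, the limit theorem follows quickly. By Proposition \ref{Lemma1}, $A_t/t \to Ck$ almost surely as $t\to+\infty$. Then I would use the standard scaling property of Brownian motion together with a time-change limit argument: writing $\mathcal B_{A_t}/\sqrt{t} = (\mathcal B_{A_t}/\sqrt{A_t})\sqrt{A_t/t}$, the first factor is (by scaling) a standard normal for each fixed ratio, and $\sqrt{A_t/t}\to \sqrt{Ck}$, so that $\mathcal B_{A_t}/\sqrt{t}$ converges in distribution to $\mathcal N(0, Ck)$. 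Matching the stated variance $k$ fixes the constant, confirming $C$ is such that $Ck = k$; I expect the bracket computation to produce $C=1$ after the trace identities simplify. Dividing by $i$ then gives $\tfrac{1}{i\sqrt t}\int_{w[0,t]}\tr(\eta)\to \mathcal N(0,k)$.

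The main obstacle will be the quadratic-variation computation in the second step: converting the Stratonovich trace integral into It\^o form and carefully simplifying the fourfold sum $\sum (I_k-J)^{-1/2}\cdots (I_k-J)^{-1/2}$ against the covariation tensor \eqref{eq-dwdw}, while correctly tracking the conjugate-transpose structure so that the cross terms $dw^*w$ and $w^*dw$ combine to yield precisely $\tr(J)$ rather than some other invariant of $J$. A subtlety worth checking is the independence of $\mathcal B$ from $J$ (equivalently from the eigenvalue process), which is needed to invoke Proposition \ref{Lemma1} together with the time change; this should follow because the driving noise of $m_t$ is orthogonal to the noise generating the radial (eigenvalue) dynamics, but it requires verifying that the relevant brackets between $m$ and the $N_j$ vanish. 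Everything downstream of the bracket identity is routine.
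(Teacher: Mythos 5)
Your proposal is correct and follows essentially the same route as the paper: both reduce the theorem to the identity $\int_{w[0,t]} \mathrm{tr}(\eta) = i\mathcal{B}_{\int_0^t \mathrm{tr}(J_s)ds}$, with $\mathcal{B}$ a one-dimensional Brownian motion independent of the eigenvalue process, and then conclude from Proposition \ref{Lemma1} by Brownian scaling. The only organizational difference is that the paper gets the bracket and the independence simultaneously, by writing $dw^*w - w^*dw = \sqrt{I_k-J}\,dR\,\sqrt{I_k-J}\sqrt{J} - \sqrt{J}\sqrt{I_k-J}\,dR^*\sqrt{I_k-J}$ and diagonalizing $J=U\Lambda U^*$, so that the area martingale is driven by the imaginary parts of the diagonal entries of the rotated $R$ while the eigenvalues are driven by the real parts --- precisely the orthogonality you flagged as the point needing verification.
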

\begin{proof}
From the proof of Proposition \ref{SDE-Matrix}, we readily have: 
\begin{equation*}
dw^*w - w^*dw =\sqrt{I_k-J_t} dR\sqrt{I_{k} - J} \sqrt{J} - \sqrt{J} \sqrt{I_{k} - J}dR^*\sqrt{I_k-J}, 
\end{equation*}
where $(R_t)_{t\ge0}$ is a $k\times k$ complex matrix-valued Brownian motion. Hence
\begin{align*}
 (I_k-J)^{-1/2} ( dw^* \, w-w^* dw)(I_k-J)^{-1/2}&=dR\sqrt{J}-\sqrt{J} dR^*\\
 &=dRU\sqrt{\Lambda}U^*-U\sqrt{\Lambda}U^*dR^*
\end{align*}
where the second quality comes from the diagonalization of $J=U\Lambda U^*$ where $U\in \mathbf{U}(k)$ and $\Lambda=\mathrm{diag}\{\lambda_1,\dots, \lambda_k\}$. Therefore
\begin{align*}
-\int_{w[0,t]} \mathrm{tr} (\eta) 
 & =\frac12  \int_0^t  \mathrm{tr} \left[ dR\sqrt{\Lambda}-\sqrt{\Lambda}dR^*\right]\\
 &\stackrel{D}{=}i\mathcal{B}_{\int_0^t \mathrm{tr}(J)ds}.
\end{align*}
where $\mathcal{B}$ is a one-dimensional Brownian motion independent of $J$.  We then obtain the desired conclusion  from \eqref{eq-tr-J}.
\end{proof}

\begin{remark}
When $k=1$, the limiting result stated in Theorem \ref{LimitTheorem} coincides with the one for the stochastic area process on the anti-de Sitter space (see Theorem 3.7 in \cite{BW-SWHF}). 
\end{remark}

\begin{remark}
The previous proof has shown that
\begin{align*}
\int_{w[0,t]} \mathrm{tr} (\eta) 
 &=i\mathcal{B}_{\int_0^t \mathrm{tr}(J)ds}.
\end{align*}
where $\mathcal{B}$ is a one-dimensional Brownian motion independent of $J$, therefore
\[
\mathbb{E} \left( e^{i \alpha \int_{w[0,t]} \mathrm{tr} (\eta) } \right)=\mathbb{E} \left( e^{- \alpha \mathcal{B}_{\int_0^t \mathrm{tr}(J)ds}} \right)=\mathbb{E} \left( e^{- \frac{\alpha^2}{2} \int_0^t \mathrm{tr}(J)ds} \right).
\]
A formula for the Laplace transform of the generalized area functional $\frac{1}{i} \int_{w[0,t]} \mathrm{tr} (\eta) $ therefore follows from  Theorem \ref{theo-Laplace}.
\end{remark}

\subsection{Limit theorem for the windings of the block determinants of the Brownian motion in $\mathbf{U}(n-k,k)$} 

In this section, we give an application of Theorems \ref{skew} and  \ref{LimitTheorem} to the study of the Brownian windings in the Lie group $\mathbf{U}(n-k,k)$.

\begin{theorem}
Let $U_t=\begin{pmatrix}  Y_t & X_t \\ W_t & Z_t \end{pmatrix}$ be a Brownian motion on the Lie group $\mathbf{U}(n-k,k)$ with $1 \le k \le n-k$.  One has the polar decomposition
\[
\det (Z_t)=\varrho_t e^{i\theta_t}
\]
where $\varrho_t  \ge 1$ is a continuous semimartingale  and $\theta_t$ is a continuous martingale such that the following convergence holds in distribution when $t \to +\infty$
\[
\frac{\theta_t}{\sqrt{t}} \to \mathcal{N}(0,2k) .
\]

\end{theorem}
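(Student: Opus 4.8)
The plan is to relate the winding $\theta_t = \arg\det(Z_t)$ of the block determinant to the generalized stochastic area functional $\int_{w[0,t]} \mathrm{tr}(\eta)$ already analyzed in Theorem \ref{LimitTheorem}. The starting observation is that under the Stratonovich SDE \eqref{eq-bm-sde}, the process $Z_t$ satisfies a tractable equation, and we have from the skew-product decomposition (Theorem \ref{skew}) the identification $\Theta_t = (I_k - w_t^* w_t)^{1/2} Z_t$ with $\Theta_t \in \mathbf{U}(k)$. Since $(I_k - w_t^*w_t)^{1/2}$ is Hermitian positive-definite, its determinant is real and positive, so that
\[
\det(Z_t) = \det\left((I_k - w_t^*w_t)^{-1/2}\right)\det(\Theta_t) = \det(I_k-J_t)^{-1/2}\,\det(\Theta_t),
\]
and since $\det(\Theta_t) \in \mathbf{U}(1)$, this polar decomposition identifies $\varrho_t = \det(I_k - J_t)^{-1/2} \ge 1$ (using $J_t = w_t^*w_t$ has eigenvalues in $[0,1)$) and $e^{i\theta_t} = \det(\Theta_t)$. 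The first step is therefore to make these identifications rigorous, in particular checking that $\varrho_t$ is a continuous semimartingale and that $\theta_t$ is a continuous (local) martingale.

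Next I would compute the stochastic differential of $\theta_t = \arg\det(\Theta_t)$. From $d\Theta_t = \circ\, d\mathfrak{a}_t\, \Theta_t$ with $\mathfrak{a}_t = \int_{w[0,t]}\eta$ a $\mathfrak{u}(k)$-valued process, Stratonovich calculus and the multiplicativity of the determinant give $d\log\det(\Theta_t) = \mathrm{tr}(\circ\, d\mathfrak{a}_t) = \circ\, d\,\mathrm{tr}(\mathfrak{a}_t)$, so that $\theta_t = \mathrm{Im}\,\mathrm{tr}(\mathfrak{a}_t) = \frac{1}{i}\int_{w[0,t]}\mathrm{tr}(\eta)$ up to the Hermitian structure making $\mathrm{tr}(\eta)$ purely imaginary (as noted in the excerpt, $\eta$ is $\mathfrak{u}(k)$-valued so its trace is purely imaginary). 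Converting the Stratonovich integral to It\^o introduces no extra finite-variation term here because $\mathrm{tr}(\eta)$ has no self-correction, which confirms that $\theta_t$ is a genuine martingale. This reduces the winding to the already-studied functional:
\[
\theta_t = \frac{1}{i}\int_{w[0,t]}\mathrm{tr}(\eta).
\]

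The final step is to apply Theorem \ref{LimitTheorem} after accounting for the factor of $2$. Theorem \ref{LimitTheorem} states that $\frac{1}{i\sqrt{t}}\int_{w[0,t]}\mathrm{tr}(\eta) \to \mathcal{N}(0,k)$ in distribution. However, the proof of that theorem shows the identity in distribution $\frac{1}{i}\int_{w[0,t]}\mathrm{tr}(\eta) \stackrel{D}{=} \mathcal{B}_{\int_0^t \mathrm{tr}(J_s)ds}$, and together with Proposition \ref{Lemma1} (giving $\frac{1}{t}\int_0^t\mathrm{tr}(J_s)ds \to k$ almost surely), a time-scaling argument yields $\frac{1}{\sqrt{t}}\mathcal{B}_{\int_0^t \mathrm{tr}(J_s)ds} \to \mathcal{N}(0,k)$. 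The discrepancy between the target variance $2k$ and the variance $k$ appearing in Theorem \ref{LimitTheorem} must come from the precise normalization of the connection form $\eta$ versus the winding, specifically from the factor $\frac{1}{2}$ in the definition \eqref{eq-def-eta} of $\eta$: the winding angle picks up the full imaginary part of the logarithmic derivative, whereas $\eta$ carries a symmetrizing factor of one half, so $\theta_t$ corresponds to $\frac{2}{i}\int_{w[0,t]}\mathrm{tr}(\eta)$ or equivalently the relevant Brownian clock runs at twice the rate. I expect the main obstacle to be precisely this bookkeeping of constants: tracking exactly how the factor of $\frac{1}{2}$ in \eqref{eq-def-eta}, the relation $\Theta_t = (I_k-J_t)^{1/2}Z_t$, and the scaling $\mathcal{B}_{kt}$ combine to produce variance $2k$ rather than $k$. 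Once the clock identification $\theta_t \stackrel{D}{=} \mathcal{B}_{2\int_0^t\mathrm{tr}(J_s)ds}$ is established, the convergence $\frac{\theta_t}{\sqrt{t}}\to\mathcal{N}(0,2k)$ follows immediately from Proposition \ref{Lemma1} and Brownian scaling.
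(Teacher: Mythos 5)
There is a genuine gap, and it is exactly where you anticipated trouble: the bookkeeping of the factor of $2$. Your identification $\det(Z_t)=\det(I_k-J_t)^{-1/2}\det(\Theta_t)$ conflates the Brownian motion on $HV_{n,k}$ with the \emph{horizontal lift} of $w_t$. The relation $\Theta=(I_k-w^*w)^{1/2}\widetilde Z$ holds only along the horizontal lift $\widetilde w_t$, whereas the process $\begin{pmatrix} X_t \\ Z_t\end{pmatrix}=p_1(U_t)$ is a full Brownian motion on $HV_{n,k}$ (the submersion $p_1$ has totally geodesic fibers), and by Theorem \ref{skew} such a Brownian motion is, in law, the horizontal lift \emph{multiplied by an independent fiber Brownian motion} $\Omega_t$ on $\mathbf{U}(k)$. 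The correct identity in law is therefore
\[
\det(Z_t)=\det(I_k-J_t)^{-1/2}\,\det(\Theta_t)\,\det(\Omega_t),
\]
so the winding picks up an extra term: $i\theta_t=i\theta_0+\mathrm{tr}(D_t)+\int_{w[0,t]}\mathrm{tr}(\eta)$, where $D_t$ is a $\mathfrak{u}(k)$-valued Brownian motion independent of $w$ coming from $\det(\Omega_t)$. Dropping $\Omega_t$, as you do, yields $\theta_t=\frac1i\int_{w[0,t]}\mathrm{tr}(\eta)$, which by Theorem \ref{LimitTheorem} would give $\theta_t/\sqrt t\to\mathcal N(0,k)$ --- contradicting the statement you are proving. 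You noticed this contradiction, but your repair is spurious: the factor $\frac12$ in the definition \eqref{eq-def-eta} of $\eta$ is already consistently accounted for in Theorem \ref{LimitTheorem} (its proof uses that very normalization to get the clock $\int_0^t\mathrm{tr}(J_s)ds$ and variance $k$), so there is no hidden doubling of the Brownian clock, and the claimed identity $\theta_t\stackrel{D}{=}\mathcal B_{2\int_0^t\mathrm{tr}(J_s)ds}$ does not hold.

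The correct source of the variance $2k$ is additivity of two \emph{independent} contributions: the fiber term satisfies $\mathrm{tr}(D_t)\stackrel{d}{=}i\sqrt t\,\mathcal N(0,k)$, and the area term satisfies $\frac1i\int_{w[0,t]}\mathrm{tr}(\eta)\stackrel{D}{=}\mathcal B_{\int_0^t\mathrm{tr}(J_s)ds}$ with $\mathcal B$ independent of $J$; since $D$ is independent of $w$, conditioning on $J$ gives $\theta_t-\theta_0\sim\mathcal N\bigl(0,\,kt+\int_0^t\mathrm{tr}(J_s)ds\bigr)$, and Proposition \ref{Lemma1} then yields $\theta_t/\sqrt t\to\mathcal N(0,k+k)=\mathcal N(0,2k)$. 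Aside from this, the rest of your outline (the identification $\varrho_t=\det(I_k-J_t)^{-1/2}\ge 1$, and reduction of the area term to the time-changed Brownian motion) matches the paper's argument; the single missing idea is the fiber Brownian motion $\Omega_t$, without which the theorem as stated cannot be reached.
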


 \begin{proof}
 We first note that from Theorem \ref{skew}, we have the identity in law
\[
\det (Z_t) =\det (I_k-w_t^*w_t)^{-1/2} \det \Theta_t \, \det  \Omega_t.
\]
From Lemma 4.6 in \cite{BW-Winding} we therefore obtain
\[
\det (Z_t) =\varrho_t e^{i \theta_t}
\]
with
\[
\varrho_t =\det(I_k-J_t)^{-1/2}, \, \, i \theta_t = i\theta_0+ \mathrm{tr}(D_t)+ \int_{w[0,t]} \mathrm{tr} (\eta) 
\]
where $D_t$ is a Brownian motion on $\mathfrak{u}(k)$ independent from $w$ and $\theta_0$ is such that 
\begin{equation*}
e^{i\theta_0}=\frac{\det Z_0}{| \det Z_0|}.
\end{equation*}
The result follows then from Theorem \ref{LimitTheorem} and from the scaling property of $\mathrm{tr}(D_t)$: 
\begin{equation*}
\mathrm{tr}(D_t) \overset{d}{=} i\sqrt{t} \mathcal{N}(0,k).
\end{equation*}
\end{proof}

\section{Appendix A: the full expansion of the Laplace transform in the rank-one case}
 For $k=1$, take $\rho(0) = 1$, then the Laplace transform \eqref{NewLaplace} reduces to: 
\begin{multline}\label{Laplace1}
\mathbb{E}\left\{\exp-\left(2\alpha^2 \int_0^t J(s) ds \right)\right\} = \frac{2\pi^{n-1}}{\Gamma(n-1)} e^{-2\alpha^2t} \\ \int_0^{\infty} v_t^{(n-1, \alpha)}\left(0,r\right) (\sinh(r))^{2n-3}\cosh(r) dr, 
\end{multline}
where (\cite{Aya-Int}, see also the proof of Theorem 1 in \cite{Demni}):
\begin{multline*}
 v_t^{(n-1, \alpha)}\left(0,r\right) = 2 \int_{r}^{\infty} \frac{dx \sinh(x)}{\sqrt{\cosh^2(x) - \cosh^2(r)}} s_{t,2(n-1)+1}(\cosh(x))
 \\ {}_2F_1\left(-2\alpha, 2\alpha, \frac{1}{2}; \frac{\cosh(r) - \cosh(x)}{2\cosh(r)}\right)dx. 
\end{multline*}
For ease of writing and in order to match our notations with those used in \cite{Demni}, we shall shift $n \geq 2 \rightarrow n+1, n \geq 1$. After all, the result of Proposition \ref{Lemma1} does not depend on $n$. 
In order to derive the limit of \eqref{Laplace1} as $t \rightarrow \infty$ (after rescaling), we firstly apply the quadratic transformation (see \cite{Erd}, (18), p. 112): 
\begin{equation*}
{}_2F_1(a,b,(a+b+1)/2; z) = {}_2F_1(a/2,b/2,(a+b+1)/2; 4z(1-z))
\end{equation*}
followed by Euler's transformation (\cite{Erd}, (22), p.64): 
\begin{equation*}
{}_2F_1(a,b,c; z) = (1-z)^{-a}{}_2F_1\left(a,c-b,c; \frac{z}{z-1}\right),
\end{equation*}
valid whenever both sides are analytic. Doing so, the heat semi-group $v_t^{(n, \alpha)}\left(0,r\right)$ may be written as: 
\begin{multline*}
v_t^{(n, \alpha)}\left(0,r\right) = \frac{4\pi^{n}}{\Gamma(n)} \int_{r}^{\infty} \frac{dx \sinh(x)}{\sqrt{\cosh^2(x) - \cosh^2(r)}} \frac{\cosh^{2\alpha}(r)}{\cosh^{2\alpha}(x)} 
\\ {}_2F_1\left(\frac{1}{2}+\alpha, \alpha, \frac{1}{2}; 1- \frac{\cosh^2(r)}{\cosh^2(x)}\right)  s_{t,2n+1}(\cosh(x))dx.
\end{multline*}
This expression has the merit to involve the Gauss hypergeometric series rather than its analytic continuation to the slit plane $\mathbb{C} \setminus [1,\infty)$: 
\begin{equation*}
{}_2F_1\left(\frac{1}{2}+\alpha, \alpha, \frac{1}{2}; 1- \frac{\cosh^2(r)}{\cosh^2(x)}\right) = \sum_{j\geq 0} \frac{(\alpha)_j(\alpha+1/2)_j}{(1/2)_j j!} \left(1- \frac{\cosh^2(r)}{\cosh^2(x)}\right)^j, \quad \zeta \leq x.
\end{equation*}
Consequently, the generalized binomial Theorem yields following expansion:
\begin{multline}\label{eq-2F1}
\left(\frac{\cosh^{2}(r)}{\cosh^{2}(x)}\right)^{\alpha} {}_2F_1\left(\frac{1}{2}+\alpha, \alpha, \frac{1}{2}; 1- \frac{\cosh^2(r)}{\cosh^2(x)}\right)  = 
\left(1-\left(1-\frac{\cosh^{2}(r)}{\cosh^{2}(x)}\right)\right)^{\alpha} \\ {}_2F_1\left(\frac{1}{2}+\alpha, \alpha, \frac{1}{2}; 1- \frac{\cosh^2(r)}{\cosh^2(x)}\right) \nonumber
  = 1+ \sum_{j\geq 1} c_j(\alpha) \left(1- \frac{\cosh^2(\zeta)}{\cosh^2(x)}\right)^j, 
\end{multline}
where 
\begin{equation*}
c_j(\alpha) = \frac{1}{j!}\sum_{m=0}^j \binom{j}{m} \frac{(\alpha+1/2)_m(\alpha)_m(-\alpha)_{j-m}}{(1/2)_m}, \quad j \geq 1. 
\end{equation*}
Now, assume $\alpha$ is small enough. Then for any $j-m \geq 1$, one has: 
\begin{equation*}
(-\alpha)_{j-m} = (-\alpha)(1-\alpha)\cdots (j-m-1-\alpha) < 0
\end{equation*}
so that 
\begin{equation*}
c_j(\alpha) < \frac{(\alpha+1/2)_j(\alpha)_j}{j!(1/2)_j}, \quad j \geq 1. 
\end{equation*}
Moreover, the fact that $m \mapsto (\alpha + 1/2)_m/(1/2)_m$ is increasing together with the previous observation show that 
\begin{align*}
c_j(\alpha) \geq \frac{(\alpha + 1/2)_j}{(1/2)_j}\left\{(\alpha)_j + \sum_{m=0}^{j-1}\binom{j}{m}(\alpha)_m(-\alpha)_{j-m}\right\} = 0,
\end{align*}
where we use the fact that the sequence of Pochhammer symbols (rising factorials) is of binomial-type. Using the fact that $s_{t,2n+1}(\cosh(x))$ is a probability density with respect to the volume element 
\begin{equation*}
\frac{2\pi^{n+1/2}}{\Gamma(n+1/2)} \sinh^{2n}(x)
\end{equation*} 
we readily get
\begin{equation*}
\frac{4\pi^{n}}{\Gamma(n)}\int_0^{\infty} (\sinh(r))^{2n-1}\cosh(r) dr \int_{r}^{\infty} \frac{dx \sinh(x)}{\sqrt{\cosh^2(x) - \cosh^2(r)}}s_{t,2n+1}(\cosh(x))  = 1.
\end{equation*}

On the other hand, changing the order of integration, we have: 
\[
\mathbb{E}\left\{\exp-\left(2\alpha^2 \int_0^t J(s) ds \right)\right\} = e^{-2\alpha^2t}\left(1+\sum_{j\ge1}c_j(\alpha)I_j\right).
\]
where we set for any $j\ge1$:
\begin{multline*}
I_j:= \frac{4\pi^{n}}{\Gamma(n)}\int_0^{\infty} (\sinh(r))^{2n-1}\cosh(r) dr \\ 
 \int_{r}^{\infty} \frac{dx \sinh(x)}{\sqrt{\cosh^2(x) - \cosh^2(r)}}s_{t,2n+1}(\cosh(x)) \left(\frac{\cosh^2x-\cosh^2r}{\cosh^2x}\right)^j.
\end{multline*}
Using standard variables changes, we derive: 
\[
\int_0^x\frac{ (\sinh(\zeta))^{2n-1}\cosh(r)}{\sqrt{\cosh^2(x) - \cosh^2(r)}}  \left(\frac{\cosh^2x-\cosh^2r}{\cosh^2x}\right)^jdr
=\frac{(\sinh (x))^{2j+2n-1}}{(\cosh(x))^{2j}}\frac{\Gamma(j+\frac12)\Gamma(n)}{2\Gamma(j+n+\frac12)}
\]
whence
\begin{align*}
I_j&=\frac{4\pi^{n}\Gamma(j+\frac12)}{2\Gamma(j+n+\frac12)}\int_0^{\infty} (\sinh(x))^{2n}(\tanh(x))^{2j}  s_{t,2n+1}(\cosh(x)) dx\\
&=\frac{\Gamma(n+\frac12)\Gamma(j+\frac12)}{\sqrt{\pi}\Gamma(n+j+\frac12)}\E\left( (\tanh(d(0, B_t)))^{2j}\right),
\end{align*} 
where $d(\cdot, \cdot)$ is the Riemannian distance and $B_t$ is the Brownian motion on $H^{2n+1}$. Altogether, we get for small $\alpha$:

\begin{equation*}
\mathbb{E}\left\{\exp-\left(2\alpha^2 \int_0^t J(s) ds \right)\right\} =  e^{-2\alpha^2t} + e^{-2\alpha^2t} \mathbb{E}\left(\sum_{j\ge1}\frac{(1/2)_j}{(n+1/2)_j} c_j(\alpha)(\tanh(d(0, B_t))^{2j}\right).
\end{equation*}
But
\begin{align*}
\sum_{j\ge1}\frac{(1/2)_j}{(n+1/2)_j} c_j(\alpha)(\tanh(d(0, B_t))^{2j} & \leq \sum_{j\ge1}\frac{(\alpha+1/2)_j(\alpha)_j}{j!(n+1/2)_j} (\tanh(d(0, B_t))^{2j}) 
\\& \leq \frac{\alpha}{n+1/2} \left(\alpha+\frac{1}{2}\right)  {}_2F_1\left(\alpha+1, \alpha + \frac{3}{2}, n+\frac{3}{2}; \tanh(d(0, B_t))^{2}\right)
\\& \leq \frac{\alpha(2\alpha+1)}{2n+1} {}_2F_1\left(\alpha+1, \alpha + \frac{3}{2}, n+\frac{3}{2}; 1\right)
\\& = \frac{\alpha(2\alpha+1)}{2n+1}\frac{\Gamma(n+3/2)\Gamma(n-1-\alpha)}{\Gamma(n-\alpha)\Gamma(n-\alpha+1/2)}
\end{align*}
where the last line follows from Gauss hypergeometric Theorem (\cite{Erd}). Consequently, the dominated convergence theorem entails 
\begin{align*}
\lim_{\alpha \rightarrow 0^+} \mathbb{E}\left(\sum_{j\ge1}\frac{(1/2)_j}{(n+1/2)_j} c_j(\alpha)(\tanh(d(0, B_t))^{2j}\right) = 0 
\end{align*}
which in turn allows to recover our limit theorem in Proposition \ref{Lemma1}.

\section{Appendix B}
Here, we provide some details of the computations leading to the identities. 
\begin{align*}
d \mathrm{tr} (\eta)  & = \mathrm{tr}\left[(I_k -w^*w)^{-1}(dw)^*(I_{n-k} - ww^*)^{-1} (dw)\right] 
\\& =-\partial \overline{\partial} \ln \det (I_k -w^*w). 
\end{align*} 
We shall use the analogue of Lemma 1 in \cite{BZ} for complex Hermitian invertible matrices which asserts that: 
\begin{equation*}
\partial_{M_{qj}} M^{-1}_{pl}  = -M^{-1}_{pq} M^{-1}_{jl}, \quad \partial_{M_{qj}} \log[\det(M)] = M^{-1}_{jq}.
\end{equation*}
Now, recall that 
\begin{equation*}
-\mathrm{tr} (\eta) = \frac12 \mathrm{tr} \left[(I_k-w^*w)^{-1} ( dw^* \, w-w^* dw)\right], 
\end{equation*}
and take $M := (I_k-w^*w)$. Then the chain rule yields the following identities: 
\begin{eqnarray*}
\partial M^{-1} (dw)^* w & = & M^{-1} w^* (dw) M^{-1} (dw)^* w \\ 
\overline{\partial} M^{-1} (dw^*) w & = & M^{-1} (dw)^* w M^{-1} (dw)^* w \\ 
\partial M^{-1} w^* (dw)  & = & M^{-1} w^* (dw) M^{-1} w^* (dw) \\  
\overline{\partial} M^{-1} w^* (dw)  & = & M^{-1} (dw)^* w M^{-1} w^* (dw).
\end{eqnarray*}   
Next, we use the fact that the exterior product of one-forms is alternating to see that: 
\begin{align*}
\mathrm{tr} \left[\overline{\partial} M^{-1} (dw)^* w\right] = \mathrm{tr} \left[\partial M^{-1} w^* (dw)\right] = 0. 
\end{align*}
As a result, we get: 
\begin{align*}
-2d\mathrm{tr} (\eta) & =  \mathrm{tr}\left[M^{-1} w^* (dw) M^{-1} (dw)^* w\right] - \mathrm{tr}\left[M^{-1} (dw)^* wM^{-1} w^* (dw)\right] - 2\mathrm{tr}\left[M^{-1} (dw)^* (dw)\right]
\\& = 2\mathrm{tr}\left[M^{-1} w^* (dw) M^{-1} (dw)^* w\right] +  2\mathrm{tr}\left[(dw) M^{-1} (dw)^*\right]
\\& = 2 \mathrm{tr}\left[(I_{n-k} + wM^{-1}w^*) (dw) M^{-1} (dw)^*\right]. 
\end{align*}
Moreover, we note that 
\begin{align*}
(I_{n-k} + wM^{-1}w^*)(I_{n-k} - ww^*) &= I_{n-k} - ww^* + wM^{-1}w^* - wM^{-1}(w^*w)w^* 
\\& = I_{n-k} - ww^* + wM^{-1}w^* - wM^{-1}(w^*w - I_k + I_k)w^* = I_{n-k}, 
\end{align*}
so that $I_{n-k} + wM^{-1}w^* = (I_{n-k} - ww^*)^{-1}$, and in turn: 
\begin{align*}
-\mathrm{tr} (\eta) & = \mathrm{tr}\left[(I_{n-k} - ww^*)^{-1}(dw) (I_k-w^*w)^{-1} (dw)^*\right]
\\& = - \mathrm{tr}\left[(I_k-w^*w)^{-1} (dw)^*(I_{n-k} - ww^*)^{-1}(dw) \right],
\end{align*}
as claimed.

We proceed now to the proof of 
\begin{equation*}
d \mathrm{tr} (\eta) = -\partial \overline{\partial} \ln \det (I_k -w^*w) = -\partial \overline{\partial} \ln \det(M).
\end{equation*}
To this end, we derive: 
\begin{equation*}
\partial_{qm}\ln \det(M) = \sum_{l} (w^*)_{lq}M^{-1}_{ml}dw_{qm} 
\end{equation*}
whence we deduce 
\begin{align*}
\overline{\partial_{rj}}\partial_{qm}\ln \det(M) = -\delta_{rq}M^{-1}_{mj} d\overline{w}_{rj} \wedge dw_{qm} - \sum_{l,s}(w^*)_{lq}M^{-1}_{mj}M^{-1}_{sl}d\overline{w}_{rj} \wedge dw_{qm}.
\end{align*}
Summing over $r,j,q,m$, we get 
\begin{align*}
\partial \overline{\partial} \ln \det(M) = \mathrm{tr}\left[(dw)M^{-1}(dw)^*\right] + \mathrm{tr}\left[M^{-1} w^* (dw) M^{-1} (dw)^* w\right] = - d\mathrm{tr} (\eta).
\end{align*}

\bibliographystyle{amsplain}
\bibliography{biblio}

\end{document}